\documentclass{amsart}
\newtheorem{theorem}{Theorem}[section]

\usepackage{latexsym}
\usepackage{amssymb,amsmath,amsfonts}
\usepackage[colorlinks=true]{hyperref}
\usepackage{graphicx}
\usepackage[numbers,sort&compress]{natbib}
\usepackage{color}

\newtheorem{lemma}[theorem]{Lemma}

\newtheorem{assup}[theorem]{Assumptions}

\theoremstyle{definition}
\newtheorem{definition}[theorem]{Definition}

\newtheorem{example}[theorem]{Example}

\theoremstyle{remark}
\newtheorem{remark}[theorem]{Remark}

\numberwithin{equation}{section}
\global\parskip 6pt \oddsidemargin
0.1cm \evensidemargin 0.4cm \headheight 0.1cm \topmargin -2.0cm
\textwidth 17.10cm \textheight 24.0cm \voffset 1.50cm \topskip 1.0cm

\title[Dynamical System for Solving Mixed Variational Inequality problems.]
{Forward-Backward-forward Dynamical System for Solving Mixed Variational Inequality problems.}
\author{ Chidi Elijah Nwakpa$^{1},$  Chinedu Izuchukwu$^{2},$   Chibueze CHristian Okeke$^{3}.$ }
\address{$^{1,2,3}$School of Mathematics, University of the Witwatersrand, Private Bag 3, Johannesburg 2050, South Africa.}
\email{$^{1},$2713235@students.wits.ac.za, chidinecsc2n@gmail.com}
\email{$^{2},$chinedu.izuchukwu@wits.ac.za, $\mbox{izuchukwu}\_c$@yahoo.com}
\email{$^{3},$chibueze.okeke87@yahoo.com, chibueze.okeke@wits.ac.za}

\begin{document}
	\keywords{Mixed variational inequalities; dynamical system; Lyapunov analysis; generalized monotonicity;  pseudomonotone; global exponential stability.\\
		{\rm 2000} {\it Mathematics Subject Classification}: 47H09; 47H10; 49J53; 90C25}
	\begin{abstract}\noindent\\
		We study in this paper a forward-backward-forward dynamical system for solving mixed variational inequality problem in a real Hilbert space. For the convergence analysis of our proposed system, we apply the Lyapunov analysis  to obtain the weak convergence of the generated trajectories when the associated operator is Lipschitz continuous and satisfies the general monotonicity condition. We also assume that the involved real-valued convex function satisfies some mild assumptions. Furthermore, the Lipschitz continuous operator is taken to be $h-$strongly pseudomonotone to establish the global exponential stability of the equilibrium point of the system for all the orbits generated. Finally, we present some numerical examples which illustrate how the  trajectories of the proposed system converge to the equilibrium point of the proposed dynamical system.
		
		
	\end{abstract}
	
	\maketitle
	\section{Introduction}
	\noindent The mixed variational inequality theory, in its diverse forms, serves as a mathematical model for several interesting phenomena occurring in various research fields. The extension and generalization of the mixed variational inequality (briefly, MVI) to study a class of linear and nonlinear problems, which arise in various fields such as engineering, fluid flow through porous media, transportation models, image restoration, electronics, economics, financial analysis, physics, optimization, and others, have made the MVI a unifying mathematical formulation for finding approximate solutions to a class of linear and nonlinear operators
	 \cite{Facchinei1,Facchinei2,Goeleven1,Goeleven2,Han,Konnov}.
	
	\noindent Let $\mathcal{H}$ be a real Hilbert space endowed with the inner product function $\langle\cdot , \cdot\rangle$ and the corresponding norm operator $\| \cdot\|,$ and $\mathcal{C}\subset\mathcal{H}$ a nonempty closed convex set in $\mathcal{H}.$ Let $\mathcal{T}:\mathcal{H}\longrightarrow\mathcal{H}$ be a Lipschitz continuous operator and given $h:\mathcal{H}\longrightarrow\overline{\mathbb{R}} = [-\infty, +\infty],$ a proper, lower semicontinuous real-valued function, we consider the problem of finding $\bar{x}\in\mathcal{C}$ such that 
	\begin{eqnarray}\label{MVI1}
		\langle \mathcal{T}\bar{x}, \underline{u}-\bar{x}\rangle+h(\underline{u})-h(\bar{x})\geq0 \ \ \forall \underline{u}\in\mathcal{C}.
	\end{eqnarray}
	The problem described by \eqref{MVI1} is called the mixed variational inequality problem or variational inequality of the second kind, and in this work, we are going to approach the MVI\eqref{MVI1} from a continuous perspective.\\
	Let us denote $S^{MVI(\mathcal{T}; h; \mathcal{C})}:=\{\bar{x}\in\mathcal{C} ~:~ \langle \mathcal{T}(\bar{x}), \underline{u}-\bar{x}\rangle +h(\underline{u})-h(\bar{x})\geq 0 \ \ \forall \underline{u}\in \mathcal{C}\}$ to be the set of solutions of the MIV\eqref{MVI1}. 
	  Note that the function $h$ in \eqref{MVI1} may not necessarily be differentiable. In particular, $h$ may correspond to the indicator function $\delta_\mathcal{C}$ of a nonempty, closed convex subset $\mathcal{C}$ of $\mathcal{H}$, which may fail to possess the necessary regularity conditions for differentiability to hold. That is, $h= \delta_\mathcal{C},$ where
	  \begin{eqnarray*}
	  	\delta_\mathcal{C}(\underline{u}) = \begin{cases}
	  		0, \ \ \mbox{if} \ \underline{u}\in\mathcal{C},\\
	  		\infty, \ \ \mbox{otherwise.} 
	  	\end{cases}
	  \end{eqnarray*}
	  In this scenario, the MVI\eqref{MVI1} becomes the classical variational inequality problem (VIP) of Stampacchia \cite{STAM}, which is:\\
	  find $\bar{x}\in\mathcal{C}$ such that 
	  \begin{eqnarray}\label{1.2}
	  	\langle \mathcal{T}(\bar{x}), \underline{u}-\bar{x}\rangle\geq 0 \ \ \mbox{for each} \ \underline{u}\in\mathcal{C}.
	  \end{eqnarray}
	  Similarly, MVI\eqref{MVI1} reduces to the constrained optimization problem of minimizing $h$ over $\mathcal{C}$ if $\mathcal{T}\equiv 0$ (see, \cite{Jahn,Ricceri}).
	
	\noindent Furthermore, since MVI generalizes other models, a good number of methods have been proposed by several scholars as a means of finding an approximate solution to the problem \eqref{MVI1}. Over the years, these effective numerical methods include the proximal methods, extragradient methods, golden ratio methods, and others (for example, see \cite{Garg,Han,JOLAOSO,Konnov,Malitsky,Noor1}). However, one valuable tool stands out—the proximal operator—whose application has also been found useful in addressing numerous other numerical problems. It is important to note that the proximal operator is a strongly convex operator and hence, it guarantees a unique solution. Also, if the function $h$ is convex (which it is in our case) then MVI\eqref{MVI1} becomes a convex mixed variational inequality problem which can be best solved by the proximal type methods (see, for instance \cite{Chen,Thakur,Wang,Xia1,Xia2}), otherwise, problem \eqref{MVI1} becomes a non-convex MVI problem.
	
	\noindent Among these methods for finding an approximate solution of the MVI\eqref{MVI1}, the most popular method is the proximal type method. This method, according to Garg \textit{et al.} in \cite{Garg}, generates, when given an initial point $x(0)\in\mathcal{H},$ a trajectory $x(t)$ that approaches a point in the solution set $S^{MVI(\mathcal{T}; h; \mathcal{C})},$ and is given by
	\begin{eqnarray}\label{1.3}
		\dot{x}(t) =-\delta\big(x(t)- prox_{\lambda h}\circ(I_\mathcal{H}-\lambda\mathcal{T})(x(t))\big),
	\end{eqnarray}
	where $\dot{x}(t) = \dfrac{d}{dt}(x(t))$
	 is the derivative of the mapping $t \longmapsto x(t)$ at any time $t,$ $I_\mathcal{H}$ is the identity operator in $\mathcal{H},$ $prox_{\lambda h}(\cdot) := \underset{\underline{v}\in\mathcal{C}}{arg\min}\left\{\lambda h(\underline{v})+\dfrac{1}{2}\|\cdot -\underline{v}\|_\mathcal{H}^2\right\}$ is the proximal mapping of  $\lambda h,$ $\delta>0,$ while $\lambda>0$ is the stepsize. 
	 The author in \cite{Antipin} noted that the continuity condition of $prox_{\lambda h}\circ(I_\mathcal{H}-\lambda\mathcal{T})(x(t))$ ensures the existence of a solution of \eqref{1.3} on a finite interval. It is also important to note that, if $prox_{\lambda h}\circ(I_\mathcal{H}-\lambda\mathcal{T})(x(t))$ satisfies the Lipschitz condition, then the trajectory exists and is unique on the infinite time interval $[0,+\infty)$. This is because the operator  $prox_{\lambda h}\circ(I_\mathcal{H}-\lambda\mathcal{T})(x(t))$ is strongly convex.
	 
	 \noindent Furthermore, the  authors in \cite{Antipin,Hassan} noted that the explicit time discretization of \eqref{1.3} with respect to the time variable $t$ and without the relaxation parameter, that is, when $\delta\hbar_n=1$ gives 
	 \begin{eqnarray}\label{1.4}
	 	x_{n+1} = prox_{\lambda h} (x_n-\lambda\mathcal{T}(x_n)), \ \ \forall n\geq 0.
	 \end{eqnarray}
	And in a case of VIP, the authors in \cite{BAUS,Zhu} established  that \eqref{1.4} converges if $\mathcal{T}$ is a co-coercive operator (inverse strongly monotone), while the authors in \cite{Facchinei1,Khanh} extended the convergence of the sequence $\{x_n\}_{n\geq0}$ generated by \eqref{1.4} to the strongly (pseudo-) monotone operator $\mathcal{T}.$
	
	\noindent The extragradient method (also called the Korpelevich's extragradient method) is known to be the mostly used method in solving MVI\eqref{MVI1} whenever the operator $\mathcal{T}$ is monotone and Lipschitz continuous. In this method, two proximal evaluations are computed in each iteration.
	Although, this method was originally designed for  solving monotone MVIs (in particular, monotone VIPs) iteratively in finite dimensional spaces, however, \cite[Theorem 12.2.11]{Facchinei1} showed that the convergence of the trajectories $x(t),  \ y(t)$ (whose analog iterative sequences are given by $x_n$ and $y_n,$ respectively) is also attainable if the operator $\mathcal{T}$ is pseudomonotone. Interestingly, many researchers have further extended this method to the infinite dimensional spaces, thus enabling the reduction in the number of proximal evaluations onto the feasible set over each iteration \cite{BOT, CEN1,JOLAOSO,Khanh,Wang,Xia1}. Example of such  proximal-type method that requires lesser proximal evaluation in each evaluation is the Tseng's extragradient method. This method requires single proximal evaluation onto the feasible set over each iteration and is given by
	\begin{eqnarray}
		\begin{cases}
			y(t) = prox_{\lambda h}(x(t)-\lambda\mathcal{T}(x(t)))\\
			\dot{x}(t) = y(t) -\lambda(\mathcal{T}(y(t))-\mathcal{T}(x(t)))\\
			x(0) =x_0\\
			x_0\in\mathcal{H}, \ \ 0<\lambda<\dfrac{1}{\beta}, \ \ t\geq0,
		\end{cases}
	\end{eqnarray}
	where $\mathcal{T}:\mathcal{H}\longrightarrow\mathcal{H}$ is monotone and $\beta-$Lipschitz continuous. Recently, Abbas and Attouch \cite[Section 5.2]{Abbas} proposed the following dynamical system
	\begin{eqnarray}\label{AA}
		\begin{cases}
			\dot{x}(t)+x(t) =  prox_{\lambda h}(x(t)-\lambda\mathcal{T}(x(t)))\\
			x(0) = x_0\\
			x_0\in\mathcal{H}, \ \ \lambda>0, \ \ t\geq0.
		\end{cases}
	\end{eqnarray}
	The authors established the weak convergence of the trajectory $x(t)$ of \eqref{AA} when the operator $\mathcal{T}$ is a co-coercive operator, and the stepsize $\lambda>0$ is selected in a suitable domain bounded by the parameter of co-coercivity of the operator $\mathcal{T}.$ Similarly, Bo$\c{t}$ and Csetnek  \cite{BOT1} in their work studied the existence and uniqueness of (locally) absolutely continuous trajectories of a dynamical system of a nonexpansive operator and established the weak convergence of the orbits to the solutions set. 
	
	\noindent However, in order to improve on the computational structure of methods that solve optimization problems, Bo$\c{t}$ \textit{et al.} in \cite{BOT}  proposed the Tseng's forward-backward-forward dynamical system for solving a pseudomonotone variational inequality \eqref{1.2} and obtained a weak convergence result and the global exponential stability of the generated trajectories under the assumptions of pseudomonotonicity and strong  pseudomonotonicity of the associated Lipschitz operator. The Tseng's forward-backward-forward algorithm provides a computationally efficient alternative to Korpelevich's extragradient method for solving a convex optimization problems on closed sets. As mentioned earlier, Tseng's algorithm requires only a single projection operation per iteration, thereby reducing computational complexity. This advantage makes Tseng's algorithm an attractive choice for addressing MVIPs governed by monotone and Lipschitz continuous operators.
	
	\noindent Motivated by the works in the literature, we propose a dynamical system for finding a solution of the MVI\eqref{MVI1} with the following contributions:
	\begin{itemize}
		\item Our proposed system features a single proximal evaluation (projection evaluation in a case where $h$ is the indicator function) in each step;
		\item We obtain a weak convergence result for the generated orbits to a solution of MVI under the general monotonicity condition;
		\item Under the strong pseudomonotonicity, we establish the global exponential stability of the equilibrium point of the system studied;
		\item  Numerical examples are provided to demonstrate the convergence of the generated trajectories to the equilibrium point of the system under consideration.
	\end{itemize}
	 
	 \noindent The rest of the content of this paper is arranged as follows:  In Section \textbf{\ref{pre}}, we present some important definitions, lemmas and preliminary
	 results subsequently needed in this work. In Section \textbf{\ref{MAIN RESULT}}, we present and discuss our proposed continuous system with both weak convergence and global exponential stability given. Section \textbf{\ref{numerics}} presents some numerical results which serve as practical illustrations of the convergence of the generated orbits to the equilibrium point of our system. 
	 
	\section{Preliminaries}\label{pre}
	\noindent In this section, we present some basic definitions and lemmas that are necessary for the convergence analysis of our proposed system. While the notations $\rightharpoonup$ and $\rightarrow$  will be adapt to mean weakly and strongly convergence, respectively. We shall denote $\overline{\mathbb{R}}:=\mathbb{R}\cup\{-\infty, +\infty\}$ to be the extended real number.\\
	
\begin{definition}\label{def2.1}
	A function $h:[0,b]\longrightarrow\mathcal{H}$ (where $0<b<+\infty$) is said to be absolutely continuous if one of the following properties is true:
	\begin{enumerate}
		\item [(1)] there is an integrable function $\phi:[0,b]\longrightarrow\mathcal{H}$ such that 
		\begin{eqnarray}\label{form1}
			h(t)=h(0)+\int_{0}^{t}\phi(x)dx \ \ \ \forall t\in[0,b];
		\end{eqnarray}
		\item [(ii)] $h$ is continuous and its distributional derivative belongs to the  Lebesgue space $L^1([0,b]; \mathcal{H}).$ That is, the distributional derivative of $h$ is Lebesgue integrable on $[0,b];$ 
		\item [(iii)] for any $\epsilon>0,$ there is $\delta>0$ such that for every finite family of intervals $I_j=(a_j,b_j),$ the following holds
		\begin{eqnarray*}
			\left(I_i\cap I_j = \emptyset ~ \mbox{for} ~ i\neq j~ \mbox{and} ~ \sum_{j}\vert b_j-a_j\vert<\delta\right)\implies\sum_{j}\|h(b_j)-h(a_j)\|<\epsilon.
		\end{eqnarray*}
	\end{enumerate}
\end{definition}
\begin{remark}\label{remark continuous}\cite{Banert,BOT1}\noindent
	\begin{enumerate}
		\item [(a)] It indeed ensues immediately from the definition above that whenever a function is absolutely continuous then, it is differentiable almost everywhere, and in this situation, its distributional derivatives  coincide with its derivative, hence, by the integration formula \eqref{form1}, we can easily recover the function from its derivative $h' = \phi.$ 
		\item [(b)] By considering the characterization in Definition \ref{def2.1}(iii), it can be easily verified that if $h:[0,b]\longrightarrow\mathcal{H},$ where $b>0,$ is absolutely continuous and $\mathcal{G}:\mathcal{H}\longrightarrow\mathcal{H}$ is $\beta-$Lipschitz continuous with $\beta>0,$ then the function $f=\mathcal{G}\circ h$ is absolutely continuous. Moreover, $f$ is almost everywhere differentiable and the inequality $\|f'(\cdot)\|\leq \beta\|h'(\cdot)\|$ is satisfied almost everywhere.
	\end{enumerate}
\end{remark}
\begin{definition} \cite{Voung 1}\label{defn expo}
	Let $\mathcal{T}:\mathcal{H}\longrightarrow\mathcal{H}$ be a Lipschitz continuous operator, $\omega : [0, +\infty) \longrightarrow\mathcal{H}$ such that  $t\longmapsto \omega(t)$ is a differentiable function. Let $\mathcal{B}(\bar{c}, r)$ denote the open ball centered at $\bar{c}$ and has the radius $r.$ Let 
	\begin{eqnarray}\label{expo}
		\dot{\omega}(t)=\beta\mathcal{T}(\omega(t)), \ \ \ t\geq0,
	\end{eqnarray}
	where $\beta\in\mathbb{R},$ denote the general dynamical system.
		Then, 
		\begin{enumerate}
			\item [I.] a point $\bar{c}\in\mathcal{H}$ is called an equilibrium point of the general dynamical system \eqref{expo} if $\mathcal{T}(\bar{c})=0;$
			\item [II.] an equilibrium point $\bar{c}$ of \eqref{expo} is said to be stable if, for any $\xi>0,$ there is $\delta>0$ such that, for all $\omega(0)\in\mathcal{B}(\bar{c}, \delta),$ the solution $\omega(t)$ of the continuous system exists and is contained in $\mathcal{B}(\bar{c}, \xi)$ for every $t>0.$ That is, $\omega(t)\in \mathcal{B}(\bar{c}, \xi), \ \forall t>0;$
			\item [III.] a stable equilibrium point $\bar{c}$ of \eqref{expo} is said to be asymptotically stable if there exists $\delta>0$ such that for every solution $\omega(t)$ of \eqref{expo} with $\omega(0)\in\mathcal{B}(\bar{c}, \delta),$ it follows that 
			$$\underset{t\to+\infty}{\lim}\omega(t) = \bar{c};$$  
			\item [IV.] an equilibrium point $\bar{c}$ of \eqref{expo} is said to be  exponentially stable if there exist $\delta > 0$ and
			constants $k>0$ and $\alpha>0$ such that, for any solution $\omega(t)$ of \eqref{expo} with $\omega(0)\in\mathcal{B}(\bar{c}, \delta),$ one has  
			\begin{eqnarray}\label{GES}
				\|\omega(t)-\bar{c}\|\leq k\|\omega(0)-\bar{c}\|e^{-\alpha t}, \ \ \ \forall t\in[0, +\infty).
			\end{eqnarray}
			In addition, if \eqref{GES} is satisfied for all solutions of \eqref{expo}, then the equilibrium point $\bar{c}$ is said to be globally exponentially stable (GES).
		\end{enumerate}
\end{definition}

\begin{definition}
	Let $h:\mathcal{H}\longrightarrow\overline{\mathbb{R}}$ be a real-valued function. Then
	\begin{enumerate}
		\item [(i)] the effective domain of $h$ is defined by $dom~h:=\{\underline{u}\in\mathcal{H} \ \vert \ h(\underline{u})<+\infty\};$
		\item [(ii)] $h$ is called a proper function if $h(\underline{u})>-\infty$ for any $\underline{u}\in \mathcal{H}$ and $dom~h\neq\emptyset.$ Thus, one can also verify that  $h(\underline{u})=+\infty$ for any $\underline{u}\notin domh~h;$
		\item [(iii)] $h$ is called a convex function if the domain of $h$ is convex and for any $\underline{u},\underline{v}\in dom~h,$ we have
		\begin{eqnarray*}
			h(\alpha\underline{u}+(1-\alpha)\underline{v})\leq\alpha h(\underline{u}) + (1-\alpha)h(\underline{v}), \ \ \ \forall\alpha\in[0,1].
		\end{eqnarray*}
	\end{enumerate}
\end{definition}	
	
	\begin{definition}
		Let $h:\mathcal{C}\subset\mathcal{H}\longrightarrow\overline{\mathbb{R}}$ be a proper, convex and lower semicontinuous real-valued function, where $\mathcal{C}$ is a nonempty closed convex set. For each $\underline{u}, \underline{v} \in\mathcal{C},$ the mapping $\mathcal{T} : \mathcal{H}\longrightarrow\mathcal{H}$ is said to be 
		\begin{enumerate}
			\item [(i)] Lipschitz continuous if there exists a constant $\beta>0,$ called the modulus (or Lipschitz constant), such that
			\begin{eqnarray*}
				\|\mathcal{T}\underline{u}-\mathcal{T}\underline{v}\|\leq\beta\|\underline{u}-\underline{v}\|;
			\end{eqnarray*} 
			\item [(ii)] monotone if 
			\begin{eqnarray*}
			\langle \mathcal{T}\underline{u}-\mathcal{T}\underline{v}, \underline{u}-\underline{v}\rangle\geq0;	
			\end{eqnarray*}
			\item [(iii)] strongly monotone if there exists a constant $\alpha>0$ such that
			\begin{eqnarray*}
				\langle \mathcal{T}\underline{u}-\mathcal{T}\underline{v}, \underline{u}-\underline{v}\rangle\geq\alpha\|\underline{u}-\underline{v}\|^2;
			\end{eqnarray*}
			\item [(iv)] pseudomonotone if  
			\begin{eqnarray*}
				\langle\mathcal{T}\underline{u}, \underline{v}-\underline{u}\rangle\geq0 \implies \langle\mathcal{T}\underline{v}, \underline{v}-\underline{u}\rangle\geq0;
			\end{eqnarray*}
			\item [(v)] $h-$pseudomonotone on $\mathcal{C},$ if 
			\begin{eqnarray*}
				\langle\mathcal{T}\underline{u}, \underline{v}-\underline{u}\rangle +h(\underline{v}) - h(\underline{u})\geq0 \implies \langle\mathcal{T}\underline{v}, \underline{v}-\underline{u}\rangle +h(\underline{v}) - h(\underline{u})\geq0 \ \  \mbox{(See \cite{JOLAOSO})};
			\end{eqnarray*}
			\item [(vi)] $h-$strongly pseudomonotone on $\mathcal{C},$ if there exists a constant $\mu>0$ such that the following holds,
			\begin{eqnarray*}
				\langle\mathcal{T}\underline{u}, \underline{v}-\underline{u}\rangle +h(\underline{v}) - h(\underline{u})\geq0 \implies \langle\mathcal{T}\underline{v}, \underline{v}-\underline{u}\rangle +h(\underline{v}) - h(\underline{u})\geq \mu\|\underline{u}-\underline{v}\|^2 \  \  \mbox{(See \cite{Zheng})}.
			\end{eqnarray*} 
		\end{enumerate}
	\end{definition}
	\begin{remark}\noindent
		\begin{enumerate}
			\item One can easily verify that $\alpha\leq\beta$ if $\mathcal{T}$ is $\alpha-$strongly monotone and $\beta-$Lipschitz continuous on $\mathcal{C}.$
			\item Notice that the $h-$pseudomonotonicity of $\mathcal{T}$ on the $dom~h$ does not necessary mean that $\mathcal{T}$ is pseudomonotone. This can be easily checked using Example \ref{ex1}, but if $h=\delta_\mathcal{C},$ then the $0-$pseudomonotonicity of $\mathcal{T}$ corresponds to the pseudomonotonicity condition of $\mathcal{T}.$
		\end{enumerate}
	\end{remark}
	\begin{lemma}\cite{BAUS}\label{A1}
		For each $\underline{u}\in\mathcal{H},$ $\underline{v}\in\mathcal{C}$ and a positive constant $\lambda,$ the following inequality is true\\
		$$\lambda(h(\underline{v})-h(prox_{\lambda h}(\underline{u})))\geq \langle \underline{u}- prox_{\lambda h}(\underline{u}), \underline{v}-prox_{\lambda h}(\underline{u})\rangle,$$ \\
		where $prox_{\lambda h}(\underline{u}) := \underset{\underline{v}\in\mathcal{C}}{argmin}\left\{\lambda h(\underline{v})+\frac{\|\underline{u}-\underline{v}\|^2}{2}\right\}.$
	\end{lemma}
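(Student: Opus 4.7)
The plan is to prove this standard characterization of the proximal operator by combining the minimizing property of $\mathrm{prox}_{\lambda h}(\underline{u})$ with the convexity of $h$, using a one-parameter perturbation and passing to the limit. Write $p := \mathrm{prox}_{\lambda h}(\underline{u})$ for brevity. Since $\mathcal{C}$ is convex and both $p, \underline{v} \in \mathcal{C}$, the curve $y_t := (1-t)p + t\underline{v}$ lies in $\mathcal{C}$ for every $t \in [0,1]$, so it is a legitimate competitor in the minimization problem that defines $p$.

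The first step is to use optimality of $p$ to write
\begin{equation*}
\lambda h(p) + \tfrac{1}{2}\|\underline{u} - p\|^2 \;\leq\; \lambda h(y_t) + \tfrac{1}{2}\|\underline{u} - y_t\|^2,
\end{equation*}
then use convexity of $h$ to bound $\lambda h(y_t) \leq (1-t)\lambda h(p) + t\lambda h(\underline{v})$. Next I would expand $\|\underline{u} - y_t\|^2 = \|\underline{u} - p\|^2 - 2t\langle \underline{u} - p, \underline{v} - p\rangle + t^2\|\underline{v} - p\|^2$, substitute, cancel the common $\tfrac{1}{2}\|\underline{u} - p\|^2$, and divide through by $t > 0$ to obtain
\begin{equation*}
\lambda h(p) \;\leq\; \lambda h(\underline{v}) - \langle \underline{u} - p, \underline{v} - p\rangle + \tfrac{t}{2}\|\underline{v} - p\|^2.
\end{equation*}
Letting $t \to 0^+$ eliminates the last term and gives exactly $\langle \underline{u} - p, \underline{v} - p\rangle \leq \lambda\bigl(h(\underline{v}) - h(p)\bigr)$, which is the claim.

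There is essentially no hard step: the only things to check are that $p$ is well-defined (which follows from $h$ being proper, convex, and lower semicontinuous, so that $y \mapsto \lambda h(y) + \tfrac{1}{2}\|\underline{u} - y\|^2$ is proper, strongly convex, and coercive, hence has a unique minimizer on $\mathcal{C}$), and that $h(p) < \infty$ and $h(\underline{v}) < \infty$ so the inequalities make sense. If $h(\underline{v}) = +\infty$ the inequality is trivial, and $h(p) < +\infty$ by the finiteness of the minimum. Alternatively, one could derive the same conclusion in a single line from the subdifferential optimality condition $\underline{u} - p \in \lambda \partial h(p)$, but the variational argument above is self-contained and avoids invoking any further machinery.
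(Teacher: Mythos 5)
Your proof is correct and complete. The paper itself gives no proof of this lemma---it is stated as a citation to Bauschke--Combettes---so there is nothing to diverge from; your variational argument (perturb along the segment $y_t=(1-t)p+t\underline{v}\in\mathcal{C}$, use convexity of $h$, expand the square, divide by $t$ and let $t\to0^+$) is the standard self-contained derivation of the prox characterization, and it correctly handles the constraint set $\mathcal{C}$, well-definedness of the minimizer, and the degenerate case $h(\underline{v})=+\infty$. The one-line alternative you mention, via $\underline{u}-p\in\lambda\partial h(p)+N_{\mathcal{C}}(p)$, is essentially how the cited reference phrases it, but your route avoids the subdifferential sum rule entirely.
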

	\begin{lemma}\label{BOT2.1}\cite[Lemma 5.2]{Abbas}\cite[Lemma 2.1]{BOT} (continuous version of a result which states the convergence of quasi-Fej$\acute{e}$r monotone sequences)\noindent\\
		If $1\leq p<\infty,$ $1\leq r<\infty,$ $\phi:[0, +\infty)\longrightarrow [0, +\infty)$ is locally absolutely continuous, $\phi\in L^p([0, +\infty)),$ $\omega: [0, +\infty)\longrightarrow \mathbb{R},$ $\omega \in L^r([0, +\infty))$ and for almost each $t\in[0, +\infty)$
		\begin{eqnarray*}
			\dfrac{d}{dt}\phi(t)\leq\omega(t),
		\end{eqnarray*}
		then, $\underset{t\to+\infty}{\lim}\phi(t) = 0.$
	\end{lemma}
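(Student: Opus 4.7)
\medskip

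\noindent \textbf{Proof proposal.} The plan is to argue by contradiction, ruling out any positive $\limsup$ of $\phi$ by showing that near any time where $\phi$ is bounded below by a positive constant, $\phi$ must remain uniformly bounded below on a \emph{fixed-length} interval to the left, which, accumulated over infinitely many such times, forces a contradiction with $\phi\in L^p([0,+\infty))$.

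\smallskip

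\noindent First I would set $L:=\limsup_{t\to+\infty}\phi(t)$ and assume for contradiction that $L>0$. By definition of $\limsup$, there is a sequence $t_n\to+\infty$ with $\phi(t_n)>L/2$. For any $s\in[0,t_n]$ one has, by the local absolute continuity of $\phi$ and the differential inequality $\tfrac{d}{dt}\phi(t)\le\omega(t)$,
\begin{equation*}
\phi(s)\;\ge\;\phi(t_n)-\int_s^{t_n}\omega(u)\,du\;>\;\tfrac{L}{2}-\int_s^{t_n}\omega(u)\,du.
\end{equation*}
The goal is then to bound the remainder integral uniformly for $s$ in a small left neighborhood of $t_n$ once $n$ is large.

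\smallskip

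\noindent Next I would fix $\delta>0$ (to be chosen below) and estimate, using H\"older's inequality with conjugate exponent $r'$ (taking $r'=\infty$ trivially when $r=1$),
\begin{equation*}
\int_{t_n-\delta}^{t_n}\omega(u)\,du\;\le\;\delta^{\,1/r'}\Bigl(\int_{t_n-\delta}^{t_n}|\omega(u)|^{r}\,du\Bigr)^{1/r}.
\end{equation*}
Because $\omega\in L^{r}([0,+\infty))$, the tail $\int_{T}^{+\infty}|\omega|^{r}$ tends to $0$ as $T\to+\infty$, so the factor $\bigl(\int_{t_n-\delta}^{t_n}|\omega|^{r}\bigr)^{1/r}$ tends to $0$ as $n\to+\infty$ with $\delta$ fixed. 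Hence for $\delta$ fixed and $n$ sufficiently large, $\int_{t_n-\delta}^{t_n}\omega\le L/4$, and combining with the previous estimate yields $\phi(s)>L/4$ for every $s\in[t_n-\delta,t_n]$.

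\smallskip

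\noindent Finally, after extracting a subsequence so that the intervals $[t_n-\delta,t_n]$ are pairwise disjoint (possible since $t_n\to+\infty$), I would conclude
\begin{equation*}
\int_0^{+\infty}\phi(t)^{p}\,dt\;\ge\;\sum_{n}\int_{t_n-\delta}^{t_n}\phi(t)^{p}\,dt\;\ge\;\sum_{n}\bigl(L/4\bigr)^{p}\delta\;=\;+\infty,
\end{equation*}
contradicting $\phi\in L^{p}([0,+\infty))$. Therefore $\limsup_{t\to+\infty}\phi(t)=0$, and since $\phi\ge 0$ this gives $\lim_{t\to+\infty}\phi(t)=0$, as required. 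The main obstacle I anticipate is handling the passage from the pointwise bound $\phi'\le\omega$ (which does not control decrease of $\phi$) to a lower bound on $\phi$ over an interval; the resolution is that we only need a lower bound on an interval ending at $t_n$, where the differential inequality controls how much $\phi$ could have increased from $s$ up to $t_n$, and the tail behaviour of $\omega\in L^{r}$ via H\"older is precisely what makes the right-hand side small.
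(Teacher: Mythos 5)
The paper does not prove this lemma: it is quoted verbatim from the cited sources (\cite[Lemma 5.2]{Abbas}, \cite[Lemma 2.1]{BOT}), so there is no in-paper argument to compare against. Your proof is correct and is essentially the standard argument used in those references: assume the limit superior is positive, use local absolute continuity together with $\dot{\phi}\le\omega$ to propagate the lower bound $\phi(t_n)>c$ backwards onto an interval $[t_n-\delta,t_n]$, control the error term $\int_{t_n-\delta}^{t_n}\omega$ via H\"older and the vanishing tail of $\|\omega\|_{L^r}$, and contradict $\phi\in L^p$ by summing over disjoint intervals. Two cosmetic points you should tidy up: (i) if $L:=\limsup_{t\to+\infty}\phi(t)=+\infty$ the phrase ``$\phi(t_n)>L/2$'' is meaningless, so you should instead fix any finite $c\in(0,L)$ and run the argument with $\phi(t_n)>c$; (ii) it is worth stating explicitly that the backward propagation step uses the identity $\phi(t_n)-\phi(s)=\int_s^{t_n}\dot{\phi}(u)\,du$, which is exactly where the local absolute continuity hypothesis (rather than mere a.e.\ differentiability) is needed. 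Neither point affects the validity of the proof.
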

   \begin{lemma}\label{BOT2.2}\cite[Lemma 5.3]{Abbas}\cite[Lemma 2.2]{BOT} (continuous version of the Opial Lemma)\noindent\\
	Let $\mathcal{C}\subset\mathcal{H}$ be a nonempty subset of $\mathcal{H}$ and $x:[0,+\infty)\longrightarrow\mathcal{H}$ a given map.\\
	Assume that
	\begin{enumerate}
		\item [(i)] the $\underset{t\to+\infty}{\lim}\|x(t)-\bar{x}\|$ exists for each $\bar{x}\in\mathcal{C};$
		\item [(ii)] every weak sequential cluster point of the map $x$ belongs to $\mathcal{C}.$\\
		Then, there exists an element $x_\infty\in\mathcal{C}$ such that $x(t)$ converges weakly to $x_\infty$ as $t\to+\infty.$
	\end{enumerate}
   \end{lemma}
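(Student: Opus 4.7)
The plan is to mimic the classical (sequential) Opial lemma argument, adapted to a continuous parameter $t\to+\infty$. The only genuine obstacle will be ruling out two distinct weak cluster points; existence of at least one such point is immediate from boundedness plus reflexivity of $\mathcal{H}$.

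First, I would use assumption (i) to observe that $x(\cdot)$ is bounded: pick any $\bar{x}\in\mathcal{C}$ (nonempty), then $\lim_{t\to+\infty}\|x(t)-\bar{x}\|$ exists in $\mathbb{R}_+$, hence $t\mapsto\|x(t)-\bar{x}\|$ is bounded on $[0,+\infty)$, which gives boundedness of $x(t)$. Since $\mathcal{H}$ is reflexive, every sequence $\{x(t_n)\}$ with $t_n\to+\infty$ admits a weakly convergent subsequence, so the set $W$ of weak sequential cluster points of $x$ as $t\to+\infty$ is nonempty, and by (ii) we have $W\subset\mathcal{C}$.

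The main step is to show that $W$ is a singleton. Suppose, for contradiction, that $x_1,x_2\in W$ with $x_1\neq x_2$, and pick sequences $t_n\to+\infty$, $s_n\to+\infty$ such that $x(t_n)\rightharpoonup x_1$ and $x(s_n)\rightharpoonup x_2$. I would use the elementary identity
\begin{equation*}
\|x(t)-x_1\|^2-\|x(t)-x_2\|^2 = \|x_2\|^2-\|x_1\|^2 + 2\langle x(t),\, x_1-x_2\rangle,
\end{equation*}
valid for every $t\geq 0$. By hypothesis (i) applied to $\bar{x}=x_1$ and $\bar{x}=x_2$ (both lie in $\mathcal{C}$ by (ii)), the left-hand side has a limit $\ell\in\mathbb{R}$ as $t\to+\infty$. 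Therefore $\langle x(t),\,x_1-x_2\rangle$ also has a limit as $t\to+\infty$; evaluating this limit along the two subsequences $t_n$ and $s_n$ and using weak convergence yields
\begin{equation*}
\langle x_1,\,x_1-x_2\rangle = \langle x_2,\,x_1-x_2\rangle,
\end{equation*}
i.e.\ $\|x_1-x_2\|^2=0$, contradicting $x_1\neq x_2$. Hence $W=\{x_\infty\}$ for some unique $x_\infty\in\mathcal{C}$.

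Finally, I would upgrade ``unique weak cluster point'' to ``weak convergence of the whole net $x(t)$'' as $t\to+\infty$. If $x(t)$ did not converge weakly to $x_\infty$, there would exist $\varphi\in\mathcal{H}$, $\varepsilon>0$ and a sequence $\tau_n\to+\infty$ with $|\langle x(\tau_n)-x_\infty,\varphi\rangle|\geq\varepsilon$; by boundedness, $\{x(\tau_n)\}$ has a weakly convergent subsequence with limit $\tilde{x}\in W=\{x_\infty\}$, contradicting the lower bound $\varepsilon$. This contradiction completes the proof. The delicate point throughout is the use of (i) for \emph{arbitrary} $\bar{x}\in\mathcal{C}$, which is what allows the difference-of-squares trick to force uniqueness — this is exactly the role played by quasi-Fejér monotonicity in the discrete Opial lemma.
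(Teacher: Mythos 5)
Your argument is correct and is essentially the standard proof of the continuous Opial lemma; note that the paper itself does not prove this statement but imports it verbatim from \cite[Lemma 5.3]{Abbas} and \cite[Lemma 2.2]{BOT}, where the same three steps (boundedness from (i), uniqueness of weak cluster points via the existence of $\lim_{t\to+\infty}\langle x(t),x_1-x_2\rangle$, then the subsequence extraction) appear. The only blemish is a sign slip in your polarization identity --- the right-hand side should read $\|x_1\|^2-\|x_2\|^2-2\langle x(t),x_1-x_2\rangle$ --- but this is immaterial since all you use is that the difference of the two squared norms equals a constant plus a fixed multiple of $\langle x(t),x_1-x_2\rangle$.
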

\begin{lemma}\label{Gronwall}\cite[Lemma 1.1]{Ke Li} (Gronwall's lemma):
	Let $\varphi:[0,+\infty)\longrightarrow[0,+\infty)$ be an absolutely nonnegative continuous function that satisfies almost everywhere the differential inequality
	\begin{eqnarray*}
		\dot{\varphi}(t)\leq -\alpha\varphi(t) + K \ \ \mbox{for some} \  \alpha>0, \ \ K>0, \ t\in(0, +\infty),
	\end{eqnarray*}
	then
	\begin{eqnarray*}
		\varphi(t)\leq\varphi(0)e^{-\alpha t} + \dfrac{K}{\alpha}. 
	\end{eqnarray*}
\end{lemma}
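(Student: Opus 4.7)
The plan is to use the classical integrating-factor technique. Starting from the hypothesis $\dot{\varphi}(t) \leq -\alpha \varphi(t) + K$, I would rewrite it as $\dot{\varphi}(t) + \alpha \varphi(t) \leq K$ and multiply through by the integrating factor $e^{\alpha t}$ to obtain $e^{\alpha t}\bigl(\dot{\varphi}(t) + \alpha \varphi(t)\bigr) \leq K e^{\alpha t}$ almost everywhere on $(0,+\infty)$. The key observation is that, at any point of differentiability of $\varphi$, the left-hand side equals $\frac{d}{dt}\bigl(e^{\alpha t}\varphi(t)\bigr)$. Since $\varphi$ is absolutely continuous and $t \mapsto e^{\alpha t}$ is $C^{\infty}$, Remark \ref{remark continuous}(b) (combined with the product rule for a.e.-differentiable functions) guarantees that $t \mapsto e^{\alpha t}\varphi(t)$ is itself absolutely continuous on each compact subinterval and its distributional derivative agrees a.e.\ with the above expression.

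Next, I would invoke the fundamental theorem of calculus for absolutely continuous functions to integrate the inequality $\frac{d}{ds}\bigl(e^{\alpha s}\varphi(s)\bigr) \leq K e^{\alpha s}$ from $0$ to $t$. This yields
\begin{equation*}
e^{\alpha t}\varphi(t) - \varphi(0) \;\leq\; \int_{0}^{t} K e^{\alpha s}\, ds \;=\; \frac{K}{\alpha}\bigl(e^{\alpha t} - 1\bigr).
\end{equation*}
Dividing through by $e^{\alpha t}$ gives $\varphi(t) \leq \varphi(0) e^{-\alpha t} + \frac{K}{\alpha}\bigl(1 - e^{-\alpha t}\bigr)$, and since $1 - e^{-\alpha t} \leq 1$ for all $t \geq 0$ and $\alpha > 0$, the advertised bound $\varphi(t) \leq \varphi(0)e^{-\alpha t} + \frac{K}{\alpha}$ follows immediately.

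I do not anticipate any substantial obstacle here; the only mildly delicate point is the justification that multiplying an a.e.\ inequality by the smooth positive factor $e^{\alpha t}$ and recognizing the derivative of the product is legitimate in the absolutely continuous setting, which is precisely what Remark \ref{remark continuous} provides. The nonnegativity of $\varphi$ is not actually used in the derivation, but it makes the conclusion meaningful as an exponential decay estimate plus a constant residual $K/\alpha$, which is the form needed for the global exponential stability arguments later in the paper.
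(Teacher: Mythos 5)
Your integrating-factor argument is correct and complete: multiplying $\dot{\varphi}(t)+\alpha\varphi(t)\leq K$ by $e^{\alpha t}$, recognizing the left-hand side as the a.e.\ derivative of the absolutely continuous function $t\mapsto e^{\alpha t}\varphi(t)$, and integrating via the fundamental theorem of calculus for absolutely continuous functions yields $\varphi(t)\leq\varphi(0)e^{-\alpha t}+\tfrac{K}{\alpha}\bigl(1-e^{-\alpha t}\bigr)$, which is even slightly sharper than the stated bound. There is nothing to compare against here: the paper states this lemma as a citation to \cite[Lemma 1.1]{Ke Li} and supplies no proof of its own, so your write-up fills a gap rather than duplicating an argument. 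Two small observations: your remark that nonnegativity of $\varphi$ is never used is accurate, and it is worth noting that the paper actually invokes the lemma later with $K=0$ (in the global exponential stability theorem, where $\dot{\varphi}(t)\leq-\alpha\varphi(t)$ and the conclusion drawn is $\varphi(t)\leq\varphi(0)e^{-\alpha t}$ with no residual term); your derivation covers that case verbatim since nothing in it requires $K>0$.
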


		 
\begin{lemma}\label{prox}
	Suppose $\mathcal{T}:\mathcal{H}\longrightarrow\mathcal{H}$ is Lipschitz continuous with constant $\beta>0$ and $h:\mathcal{H}\longrightarrow\mathbb{R}$ is a proper convex lower semicontinuous real-valued function. Then the operator $prox_{\lambda h}\circ(I_\mathcal{H}-\lambda\mathcal{T}):\mathcal{H}\longrightarrow\mathbb{R}$ is $(1+\lambda\beta)-$Lipschitz continuous, where $I_\mathcal{H}$ is the identity operator in $\mathcal{H},$ $\lambda>0.$ 
\end{lemma}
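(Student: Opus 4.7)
The plan is to prove the Lipschitz estimate for the composition by handling each factor separately and then combining them via the multiplicative behavior of Lipschitz constants under composition. So I would split the argument into three short steps: first show that $\mathrm{prox}_{\lambda h}$ is $1$-Lipschitz (nonexpansive), next show that $I_\mathcal{H}-\lambda\mathcal{T}$ is $(1+\lambda\beta)$-Lipschitz, and finally compose the two bounds.

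For the nonexpansiveness of $\mathrm{prox}_{\lambda h}$, I would invoke Lemma \ref{A1}. Given $\underline{u}_1,\underline{u}_2\in\mathcal{H}$, let $p_i:=\mathrm{prox}_{\lambda h}(\underline{u}_i)$. Applying Lemma \ref{A1} twice (with $(\underline{u},\underline{v})=(\underline{u}_1,p_2)$ and then $(\underline{u},\underline{v})=(\underline{u}_2,p_1)$) and adding the two resulting inequalities cancels the $h$-terms and produces
\begin{equation*}
\|p_1-p_2\|^2 \le \langle \underline{u}_1-\underline{u}_2,\, p_1-p_2\rangle \le \|\underline{u}_1-\underline{u}_2\|\,\|p_1-p_2\|,
\end{equation*}
by Cauchy--Schwarz, giving $\|p_1-p_2\|\le\|\underline{u}_1-\underline{u}_2\|$. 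For the second factor, the triangle inequality together with the $\beta$-Lipschitz continuity of $\mathcal{T}$ immediately yields $\|(I_\mathcal{H}-\lambda\mathcal{T})\underline{u}-(I_\mathcal{H}-\lambda\mathcal{T})\underline{v}\|\le(1+\lambda\beta)\|\underline{u}-\underline{v}\|$.

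Composing the two estimates finishes the proof: for any $\underline{u},\underline{v}\in\mathcal{H}$,
\begin{equation*}
\|\mathrm{prox}_{\lambda h}(I_\mathcal{H}-\lambda\mathcal{T})\underline{u}-\mathrm{prox}_{\lambda h}(I_\mathcal{H}-\lambda\mathcal{T})\underline{v}\|\le \|(I_\mathcal{H}-\lambda\mathcal{T})\underline{u}-(I_\mathcal{H}-\lambda\mathcal{T})\underline{v}\|\le(1+\lambda\beta)\|\underline{u}-\underline{v}\|.
\end{equation*}
There is no real obstacle here; the only point that requires any care is the symmetric pairing trick used with Lemma \ref{A1} to derive nonexpansiveness of the proximal map, since Lemma \ref{A1} is stated as a one-sided variational inequality rather than as a firm nonexpansiveness estimate.
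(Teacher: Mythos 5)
Your proposal is correct and follows essentially the same route as the paper: nonexpansiveness of $prox_{\lambda h}$ composed with the $(1+\lambda\beta)$-Lipschitz bound for $I_\mathcal{H}-\lambda\mathcal{T}$ obtained from the triangle inequality. The only difference is that you actually derive the nonexpansiveness from Lemma \ref{A1} via the symmetric pairing argument, whereas the paper simply asserts it as a known fact.
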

\begin{proof}\noindent\\
	Since $prox_{\lambda h}(\cdot)$ is nonexpansive, we have
	$$\|prox_{\lambda h}(\underline{u})-prox_{\lambda g}(\underline{v})\|\leq\|\underline{u}-\underline{v}\|.$$
	This implies that
	\begin{eqnarray*}
		\|prox_{\lambda h}\circ(I_\mathcal{H}-\lambda\mathcal{T})x(t)-prox_{\lambda h}\circ(I_\mathcal{H}-\lambda\mathcal{T})y(t)\|&\leq&\|(I_\mathcal{H}-\lambda\mathcal{T})x(t)-(I_\mathcal{H}-\lambda\mathcal{T})y(t)\| \\
		&=&\|x(t)-y(t)+\lambda\left[\mathcal{T}(y(t))-\mathcal{T}(x(t))\right]\|\\
		&\leq& \|x(t)-y(t)\|+\lambda\beta\|x(t)-y(t)\|\\
		&=& (1+\lambda\beta)\|x(t)-y(t)\|.
	\end{eqnarray*}
\end{proof}
	\section{Main Results}\label{MAIN RESULT}
\noindent In this section, we present and analyze a forward-backward-forward dynamical system for solving  MVI \eqref{MVI1}. Before we present our system, let us first consider some assumptions which the convergence of the generated trajectories is based on.
\begin{assup}\label{ASS1}
	Let $\mathcal{T}:\mathcal{H}\longrightarrow\mathcal{H}$ be an operator and $h:\mathcal{C}\subset\mathcal{H}\longrightarrow\overline{\mathbb{R}},$ a real-valued function, where the set $\mathcal{C}$ is nonempty, closed and convex.
	Suppose the following hold:
	\begin{enumerate}
		\item [(C$_1$)] the solutions set, $S^{MVI(\mathcal{T}; h; \mathcal{C})}\neq\emptyset;$
		\item [(C$_2$)] $\mathcal{T}$ is Lipschitz continuous with Lipschitz constant $\beta>0;$
		\item [(C$_3$)] $h$ is a proper, lower semicontinuous and convex real-valued function on $\mathcal{C}$;
		\item [(C$_4$)] $\mathcal{T}$ is monotone. In particular, $\mathcal{T}$ and $h$ satisfy the following general monotonicity condition on $\mathcal{C}$
		\begin{eqnarray}
			\langle \mathcal{T}(\underline{v}), \underline{v}-\underline{u}\rangle +h(\underline{v})-h(\underline{u})\geq0, \ \ \forall \underline{v}\in\mathcal{C}, \ \ \forall \underline{u}\in S^{MVI(\mathcal{T}; h; \mathcal{C})};
		\end{eqnarray}
		\item [(C$_5$)] $\mathcal{T}$ is sequentially weakly continuous on $\mathcal{C},$ that is, if $x(t_n)\rightharpoonup\bar{x},$ then $\mathcal{T}(x(t_n))\rightharpoonup\mathcal{T}(\bar{x}).$
	\end{enumerate}
\end{assup}

\begin{remark}\noindent
	\begin{itemize}
		\item [(i)]  Notice that the operator $\mathcal{T}$ satisfies assumption (C$_4$) if it is $h-$pseudomonotone  (monotone) on $\mathcal{C}.$
		\item [(ii)]  It is also reasonable to mention that the convex function $h$ can be Lipschitz continuous. Though this condition is strong but it  does not in anyway exclude the VIP, since the indicator function over the set $\mathcal{C}$ is Lipschitz continuous on its effective field.
	\end{itemize}

 \end{remark}
\hrule
Dynamical System for Solving Mixed Variational Inequality Problem \eqref{MVI1}.
\hrule
\noindent Given $\lambda>0$ and $x(0)\in\mathcal{H},$ the trajectories generated by our dynamical system are given as follows
\begin{eqnarray}\label{DS MVI}
	\begin{cases}
		y(t) = prox_{\lambda h}(x(t)-\lambda\mathcal{T}x(t))\\
		\dot{x}(t)+x(t)=y(t)+\lambda\left[\mathcal{T}(x(t))-\mathcal{T}(y(t))\right]\\
		x(0)= x_0.
	\end{cases}
\end{eqnarray}
\begin{remark}\cite[Defintion 2]{Banert}, \cite[Definition 2]{BOT1} 
	We say that the trajectory $x : [0, +\infty) \longrightarrow \mathcal{H}$ is a strong global solution of \eqref{DS MVI} if the
	following hold:
	\begin{enumerate}
		\item [I.]  the trajectory $x : [0, +\infty) \longrightarrow\mathcal{H}$  is locally absolutely continuous, that is, absolutely continuous on
		every interval $[0, \rho], \ \  0 < \rho < +\infty;$
		\item [II.] $\dot{x}(t)+x(t)-y(t)-\lambda\left[\mathcal{T}(x(t))-\mathcal{T}(y(t))\right] = 0$ for almost each $t\in[0,+\infty);$
		\item [III.] $x(0) = x_0.$
	\end{enumerate}
\end{remark}
	
\begin{lemma}\label{DS1}
		Assume that C$_1$---C$_4$ of Assumption \ref{ASS1} are satisfied by the operator $\mathcal{T}$ and the real-valued function $h.$ Then, the following holds for each $\bar{x}\in S^{MVI(\mathcal{T}; h; \mathcal{C})}$ 
		\begin{eqnarray*}
			\langle \dot{x}(t), x(t)-\bar{x}\rangle\leq -[1-\lambda(1+\beta^2)]\|x(t)-y(t)\|^2 \ \ \forall t\in[0,+\infty).
		\end{eqnarray*}
	\end{lemma}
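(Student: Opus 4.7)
The plan is to substitute the dynamics into the inner product, split $x(t)-\bar{x}=(x(t)-y(t))+(y(t)-\bar{x})$, dispatch the $(y(t)-\bar{x})$ piece using the proximal characterization (Lemma \ref{A1}) combined with condition (C$_4$), and then absorb the remaining cross term through Cauchy--Schwarz, Young, and Lipschitz continuity.

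More concretely, the first step is to read off from \eqref{DS MVI} that $\dot{x}(t)=y(t)-x(t)+\lambda[\mathcal{T}(x(t))-\mathcal{T}(y(t))]$, so
\begin{equation*}
\langle\dot{x}(t),x(t)-\bar{x}\rangle=\langle y(t)-x(t),x(t)-\bar{x}\rangle+\lambda\langle\mathcal{T}(x(t))-\mathcal{T}(y(t)),x(t)-\bar{x}\rangle.
\end{equation*}
Writing $x(t)-\bar{x}=(x(t)-y(t))+(y(t)-\bar{x})$ in both inner products produces the desired clean term $-\|x(t)-y(t)\|^2$ from $\langle y(t)-x(t),x(t)-y(t)\rangle$, together with a residual bilinear term in $x(t)-y(t)$ and a packet of terms carrying the factor $y(t)-\bar{x}$.

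The key step is the second one: I rewrite the packet of terms involving $y(t)-\bar{x}$ as
\begin{equation*}
\langle y(t)-(x(t)-\lambda\mathcal{T}(x(t))),y(t)-\bar{x}\rangle-\lambda\langle\mathcal{T}(y(t)),y(t)-\bar{x}\rangle,
\end{equation*}
and apply Lemma \ref{A1} with $\underline{u}=x(t)-\lambda\mathcal{T}(x(t))$ and $\underline{v}=\bar{x}$ (noting that $y(t)=\mathrm{prox}_{\lambda h}(\underline{u})\in\mathcal{C}$) to bound the first inner product by $\lambda(h(\bar{x})-h(y(t)))$. The packet is thus dominated by $\lambda[\langle\mathcal{T}(y(t)),\bar{x}-y(t)\rangle+h(\bar{x})-h(y(t))]$, which is exactly the negative of the expression forced to be nonnegative by the general monotonicity condition (C$_4$) applied with $\underline{v}=y(t)\in\mathcal{C}$ and $\underline{u}=\bar{x}\in S^{MVI(\mathcal{T};h;\mathcal{C})}$. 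Consequently this whole packet is $\leq 0$, and I arrive at
\begin{equation*}
\langle\dot{x}(t),x(t)-\bar{x}\rangle\leq-\|x(t)-y(t)\|^2+\lambda\langle\mathcal{T}(x(t))-\mathcal{T}(y(t)),x(t)-y(t)\rangle.
\end{equation*}

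For the final step I bound the cross term by Cauchy--Schwarz, then Young's inequality, then the $\beta$-Lipschitz estimate of (C$_2$), to get a multiple of $\|x(t)-y(t)\|^2$ with coefficient $\lambda(1+\beta^2)$ (or, more sharply, $\lambda(1+\beta^2)/2$, which still implies the stated bound). Rearranging yields the claim. The main obstacle is the bookkeeping in the second step: the proximal inequality and the monotonicity-type condition must be assembled precisely so that the $h$-terms cancel and the residual $\mathcal{T}(y(t))$-term fits the template of (C$_4$); once that algebra lines up, the rest is routine.
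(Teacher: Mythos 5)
Your proposal is correct and follows essentially the same route as the paper's proof: both invoke Lemma \ref{A1} at the point $x(t)-\lambda\mathcal{T}(x(t))$ with $\underline{v}=\bar{x}$, combine it with condition (C$_4$) applied at $\underline{v}=y(t)$ to kill the terms carrying $y(t)-\bar{x}$ (the paper phrases this as $\langle x(t)-y(t)-\lambda[\mathcal{T}(x(t))-\mathcal{T}(y(t))],\,y(t)-\bar{x}\rangle\geq 0$ before splitting, whereas you split first, but the algebra is identical), and then estimate the remaining cross term via Cauchy--Schwarz, Young, and the Lipschitz bound to reach the coefficient $\lambda(1+\beta^2)$. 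The only difference is cosmetic bookkeeping, so no further comment is needed.
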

	\begin{proof}\noindent\\
		Since C$_1$ of Assumption \ref{ASS1} holds, that is, $S^{MVI(\mathcal{T}; h; \mathcal{C})}\neq\emptyset.$ Let $\bar{x}\in S^{MVI(\mathcal{T}; h; \mathcal{C})},$ then for every $y(t)\in\mathcal{C},$ it follows that
		\begin{eqnarray*}
			\langle \mathcal{T}\bar{x}, y(t)-\bar{x}\rangle+h(y(t))-h(\bar{x})\geq 0 \ \ \ \  \forall t\in[0,+\infty).
		\end{eqnarray*}
		Since $\mathcal{T}$ and $h$ satisfy the general monotonicity condition on $\mathcal{C}.$ That is, C$_4$ of Assumption \ref{ASS1} is satisfied, we have
		\begin{eqnarray}\label{1}
			\langle\mathcal{T}(y(t)), y(t)-\bar{x}\rangle+h(y(t))-h(\bar{x})\geq0 \ \ \forall y(t)\in\mathcal{C}, \ \ \forall \bar{x}\in S^{MVI(\mathcal{T}; h; \mathcal{C})}, \ \ t\in[0,+\infty).
		\end{eqnarray}
		Using the definition of $y(t)= prox_{\lambda h}(x(t)-\lambda\mathcal{T}(x(t))),$ then by Lemma \ref{A1} we have that
		\begin{eqnarray}\label{2}
			&&\langle x(t)-\lambda\mathcal{T}(x(t))-y(t), \bar{x}-y(t)\rangle\nonumber\\
			&&\leq\lambda\left[h(\bar{x})-h(y(t))\right]  \ \ \forall y(t)\in\mathcal{C},  \ \forall \bar{x}\in S^{MVI(\mathcal{T}; h; \mathcal{C})},  \ t\in[0,+\infty).
		\end{eqnarray} 
		From \eqref{1}, we have
		\begin{eqnarray}\label{3}
			\langle\lambda\mathcal{T}(y(t)), \bar{x}-y(t)\rangle\leq\lambda\left[h(y(t))-h(\bar{x})\right].
		\end{eqnarray}
		Combining \eqref{2} and \eqref{3}, we obtain
		\begin{eqnarray*}
			\langle x(t)-y(t)-\lambda\left[\mathcal{T}(x(t))-\mathcal{T}(y(t))\right], y(t)-\bar{x}\rangle \geq 0.
		\end{eqnarray*}
		By the definition of $\dot{x}(t)$ in  dynamical system \eqref{DS MVI}, we have from the last inequality that
		\begin{eqnarray}\label{3a}
			0&\leq&\langle x(t)-y(t)-\lambda \left[\mathcal{T}(x(t))-\mathcal{T}(y(t))\right], y(t)-\bar{x}\rangle\nonumber\\
			&=&\langle x(t)-y(t)-\lambda \left[\mathcal{T}(x(t))-\mathcal{T}(y(t))\right], y(t)-x(t)\rangle\nonumber\\
			&&\;+\; \langle x(t)-y(t)-\lambda \left[\mathcal{T}(x(t))-\mathcal{T}(y(t))\right], x(t)-\bar{x}\rangle\nonumber\\
			&=& \langle x(t)-y(t)-\lambda \left[\mathcal{T}(x(t))-\mathcal{T}(y(t))\right], y(t)-x(t)\rangle -\langle \dot{x}(t), x(t)-\bar{x}\rangle.
		\end{eqnarray}
		Using the fact that $\mathcal{T}$ is $\beta-$Lipschitz continuous, then \eqref{3a} gives us that
		\begin{eqnarray}\label{4}
			\langle \dot{x}(t), x(t)-\bar{x}\rangle&\leq& \langle x(t)-y(t)-\lambda \left[\mathcal{T}(x(t))-\mathcal{T}(y(t))\right], y(t)-x(t)\rangle\nonumber\\
			&=& -\|x(t)-y(t)\|^2+\lambda \langle\mathcal{T}(x(t))-\mathcal{T}(y(t)), x(t)-y(t)\rangle\nonumber\\
			&\leq&-\|x(t)-y(t)\|^2 +\dfrac{\lambda}{2}(1+\beta^2)\|x(t)-y(t)\|^2\nonumber\\
			&\leq& -\|x(t)-y(t)\|^2+\lambda(1+\beta^2)\|x(t)-y(t)\|^2\nonumber\\
			&=& -\left[1-\lambda(1+\beta^2)\right]\|x(t)-y(t)\|^2 \ \ \ \ \forall t\in [0,+\infty).
		\end{eqnarray}
	\end{proof}
	\begin{lemma}\label{Lemma bdd}
		Assume that $C_1-C_4$ hold,
		 and $0<\lambda<\dfrac{1}{1+\beta^2}.$ Then, for each $\bar{x}\in S^{MVI(\mathcal{T}; h; \mathcal{C})},$ the function $t\longmapsto\|x(t)-\bar{x}\|^2$ is nonincreasing, that is, the trajectory $x(t)$ is bounded and convergent. Consequently,
		\begin{eqnarray*}
			\int_{0}^{+\infty}\|x(t)-y(t)\|^2 dt < +\infty \ \ \mbox{and}\ \ \underset{t\to+\infty}{\lim}\|x(t)-y(t)\| = 0.
		\end{eqnarray*}
	\end{lemma}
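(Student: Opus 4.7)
The starting move is to differentiate $\phi(t):=\|x(t)-\bar x\|^2$. Since $x(\cdot)$ is locally absolutely continuous, so is $\phi$, and almost everywhere one has $\dot\phi(t)=2\langle\dot x(t),x(t)-\bar x\rangle$. Applying Lemma~\ref{DS1} at this point gives
\begin{equation*}
\dot\phi(t)\;\le\;-2\bigl[1-\lambda(1+\beta^2)\bigr]\,\|x(t)-y(t)\|^2\quad\text{for a.e.\ }t\ge 0.
\end{equation*}
Because $0<\lambda<\frac{1}{1+\beta^2}$, the bracketed constant $\kappa:=1-\lambda(1+\beta^2)$ is strictly positive, so $\dot\phi(t)\le 0$ a.e. This proves that $t\mapsto\|x(t)-\bar x\|^2$ is nonincreasing, which gives boundedness of the orbit $x(\cdot)$ and, by monotone convergence of a bounded nonincreasing nonnegative function, existence of $\lim_{t\to+\infty}\|x(t)-\bar x\|$ for every $\bar x\in S^{MVI(\mathcal{T};h;\mathcal{C})}$.

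Integrating the same differential inequality over $[0,T]$ yields $2\kappa\int_0^T\|x(t)-y(t)\|^2\,dt\le \phi(0)-\phi(T)\le\phi(0)$, and letting $T\to+\infty$ gives
\begin{equation*}
\int_0^{+\infty}\|x(t)-y(t)\|^2\,dt\;\le\;\frac{\|x_0-\bar x\|^2}{2\kappa}\;<\;+\infty,
\end{equation*}
which is the first claimed asymptotic property.

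For the final pointwise limit I plan to apply the continuous quasi-Fej\'er result, Lemma~\ref{BOT2.1}, with $\phi_1(t):=\|x(t)-y(t)\|^2$ and $p=1$. The previous step already gives $\phi_1\in L^1([0,+\infty))$. What remains is to produce an $L^1$ majorant for $\dot\phi_1$. From the defining equation of the dynamical system~\eqref{DS MVI} together with the Lipschitz continuity of $\mathcal T$,
\begin{equation*}
\|\dot x(t)\|\;\le\;\|x(t)-y(t)\|+\lambda\beta\|x(t)-y(t)\|\;=\;(1+\lambda\beta)\|x(t)-y(t)\|,
\end{equation*}
so $\dot x\in L^2([0,+\infty);\mathcal H)$. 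Since $y(t)=\mathrm{prox}_{\lambda h}\circ(I_\mathcal{H}-\lambda\mathcal T)(x(t))$, Lemma~\ref{prox} combined with Remark~\ref{remark continuous}(b) shows that $y$ is locally absolutely continuous with $\|\dot y(t)\|\le(1+\lambda\beta)\|\dot x(t)\|$ almost everywhere, hence $\dot y\in L^2$ as well. Writing $\dot\phi_1(t)=2\langle\dot x(t)-\dot y(t),x(t)-y(t)\rangle$ and using Cauchy--Schwarz together with $2ab\le a^2+b^2$ produces a bound of the form $\dot\phi_1(t)\le\omega(t)$ where $\omega(t)$ is a fixed linear combination of $\|\dot x(t)\|^2$, $\|\dot y(t)\|^2$, and $\|x(t)-y(t)\|^2$. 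Each of these lies in $L^1$, so $\omega\in L^1$, and Lemma~\ref{BOT2.1} delivers $\lim_{t\to+\infty}\|x(t)-y(t)\|^2=0$.

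\textbf{Main obstacle.} The one place that requires care is the last paragraph: it is tempting to try to conclude $\|x(t)-y(t)\|\to 0$ directly from $L^1$-integrability of its square, but that implication is false in general. The honest route is via Lemma~\ref{BOT2.1}, and the work there is in verifying the derivative majorant is in $L^1$. This is precisely where Lemma~\ref{prox} pays off, by transferring Lipschitz regularity from the forward--backward--forward step to $y(\cdot)$ and thereby giving the needed $L^2$ control on $\dot y$ from the already established $L^2$ control on $\dot x$.
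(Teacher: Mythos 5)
Your proposal is correct and follows essentially the same route as the paper: differentiate $\|x(t)-\bar x\|^2$, invoke Lemma~\ref{DS1} to get monotonicity and the integral bound, then use Lemma~\ref{prox} and Remark~\ref{remark continuous}(b) to control $\dot y$ by $\dot x$ and hence bound $\frac{d}{dt}\|x(t)-y(t)\|^2$ by an $L^1$ function so that Lemma~\ref{BOT2.1} applies. The only cosmetic difference is that the paper chains the estimates to obtain the majorant as a constant multiple of $\|x(t)-y(t)\|^2$ directly, whereas you keep it as a linear combination of $\|\dot x\|^2$, $\|\dot y\|^2$ and $\|x-y\|^2$; both are in $L^1$ for the same reason.
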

	\begin{proof}\noindent\\
		Recall from Lyapunov analysis that
		\begin{eqnarray}\label{4b}
			\dfrac{d}{dt}\|x(t)-\bar{x}\|^2 = 2\langle\dot{x}(t), x(t)-\bar{x}\rangle.
		\end{eqnarray}
		Using \eqref{4} in \eqref{4b}, we obtain
		\begin{eqnarray}\label{5}
			\dfrac{d}{dt}\|x(t)-\bar{x}\|^2\leq -2\left[1-\lambda(1+\beta^2)\right]\|x(t)-y(t)\|^2 \ \ \forall t\in[0,+\infty).
		\end{eqnarray}
		Since $0<\lambda<\dfrac{1}{1+\beta^2},$ it follows that
		$\dfrac{d}{dt}\|x(t)-\bar{x}\|^2\leq0 \ \ \forall t\in[0,+\infty).$ Hence, $t\longmapsto\|x(t)-\bar{x}\|^2$ is a nonincreasing function for each $ t\in[0,+\infty).$ Thus the trajectory $x(t)$ is bounded.\\
		
		\noindent On the other hand, from \eqref{5}, we obtain
		\begin{eqnarray}\label{6}
		2\left[1-\lambda(1+\beta^2)\right]\|x(t)-y(t)\|^2dt\leq-d\left[\|x(t)-\bar{x}\|^2 \right].	
		\end{eqnarray}
		Integrating \eqref{6} from $0$ to $T,$ where $T>0,$ gives
		\begin{eqnarray}\label{7}
			2\left[1-\lambda(1+\beta^2)\right]\int_{0}^{T}\|x(t)-y(t)\|^2dt&\leq&\|x(0)-\bar{x}\|^2-\|x(T)-\bar{x}\|^2\nonumber\\
			&\leq& \|x(0)-\bar{x}\|^2.
		\end{eqnarray}
		Letting $T\longrightarrow+\infty$ in \eqref{7}, then
		\begin{eqnarray*}
			\int_{0}^{+\infty}\|x(t)-y(t)\|^2dt\leq\dfrac{1}{2}\dfrac{1}{1-\lambda(1+\beta^2)}\|x_0-\bar{x}\|^2<+\infty.
		\end{eqnarray*} 
		Next, we show that $\underset{t\to+\infty}{\lim}\|x(t)-y(t)\| = 0.$\\
		Note that $prox_{\lambda h}$ is nonexpansive and $\mathcal{T}$ is $\beta-$Lipschitz continuous. Recall from Lemma \ref{prox} that $prox_{\lambda h}\circ(I_\mathcal{H}-\lambda\mathcal{T})$ is Lipschitz continuous with constant $(1+\lambda\beta).$ Using the definition of $y(t),$ that is
		$$y(t)=prox_{\lambda h}\circ(I_\mathcal{H}-\lambda\mathcal{T})(x(t)),$$ 
		we have that the trajectory $y(t)$ is locally absolutely continuous, hence by Remark \ref{remark continuous}b, it follows for almost all $t\geq0,$ that 
		\begin{eqnarray}\label{8}
			\|\dot{y}(t)\|\leq(1+\lambda \beta)\|\dot{x}(t)\|.
		\end{eqnarray}
		On the other hand, using the definition of $\dot{x}(t),$ we have
		\begin{eqnarray}\label{9}
			\|\dot{x}(t)\| &=& \|y(t)-x(t)+\lambda\left[\mathcal{T}(x(t))-\mathcal{T}(y(t))\right]\nonumber\\
			&\leq&\|x(t)-y(t)\|+\lambda\|\mathcal{T}(x(t))-\mathcal{T}(y(t))\|\nonumber\\
			&\leq&(1+\lambda\beta)\|x(t)-y(t)\| \ \ \ \ \ \forall t\in[0,\infty).
		\end{eqnarray}
		Therefore, using the Lyapunov analysis, Cauchy inequality, and inequalities \eqref{8} and \eqref{9}, we obtain for almost every $t\in[0,+\infty)$ that
		\begin{eqnarray*}
			\dfrac{d}{dt}\|x(t)-y(t)\|^2 &=& 2\langle\dot{x}(t)-\dot{y}(t), x(t)-y(t)\rangle\\
			&\leq&2\|\dot{x}(t)-\dot{y}(t)\|\|x(t)-y(t)\|\\
			&\leq&2(\|\dot{x}(t)\|+\|\dot{y}(t)\|)\|x(t)-y(t)\|\\
			&\leq&2\left[(1+\lambda\beta)\|x(t)-y(t)\|+(1+\lambda\beta)^2\|x(t)-y(t)\|\right]\|x(t)-y(t)\|\\
			&=& 2\left[(1+\lambda\beta)(2+\lambda\beta)\right]\|x(t)-y(t)\|^2.
		\end{eqnarray*}
		Hence, we conclude from Lemma \ref{BOT2.1} that
		$$\underset{t\to+\infty}{\lim}\|x(t)-y(t)\|=0.$$
	\end{proof}

    \begin{theorem}
		Suppose that Assumption \ref{ASS1} is satisfied and $0<\lambda<\dfrac{1}{1+\beta^2}.$ Then, the trajectories $x(t)$ and $y(t)$ generated by the dynamical system \eqref{DS MVI} converges weakly to a point in $S^{MVI(\mathcal{T}; h; \mathcal{C})},$ as $t\to+\infty.$
	\end{theorem}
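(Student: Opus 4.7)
The plan is to invoke the continuous Opial lemma (Lemma \ref{BOT2.2}) with target set $S^{MVI(\mathcal{T}; h; \mathcal{C})}$. The first Opial hypothesis—existence of $\lim_{t\to+\infty}\|x(t)-\bar{x}\|$ for every $\bar{x}\in S^{MVI(\mathcal{T}; h; \mathcal{C})}$—is already supplied by Lemma \ref{Lemma bdd}, since under $0<\lambda<1/(1+\beta^2)$ the map $t\mapsto\|x(t)-\bar{x}\|^2$ is nonincreasing and nonnegative, hence convergent. The whole effort then is to verify the second hypothesis: every weak sequential cluster point $\bar{x}^*$ of $x(\cdot)$ belongs to $S^{MVI(\mathcal{T}; h; \mathcal{C})}$.

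Fix $t_n\to+\infty$ with $x(t_n)\rightharpoonup\bar{x}^*$. From $\|x(t)-y(t)\|\to 0$ (Lemma \ref{Lemma bdd}) one also has $y(t_n)\rightharpoonup\bar{x}^*$; since each $y(t_n)\in\mathcal{C}$ and $\mathcal{C}$ is weakly closed, $\bar{x}^*\in\mathcal{C}$. Applying Lemma \ref{A1} to $y(t)=prox_{\lambda h}(x(t)-\lambda\mathcal{T}(x(t)))$ with an arbitrary test point $\underline{v}\in\mathcal{C}$ and rearranging gives
\begin{equation*}
\lambda[h(y(t))-h(\underline{v})] + \lambda\langle \mathcal{T}(x(t)), y(t)-\underline{v}\rangle \leq \langle x(t)-y(t), y(t)-\underline{v}\rangle.
\end{equation*}
I would then split the cross term as $\langle \mathcal{T}(x(t)), y(t)-\underline{v}\rangle=\langle \mathcal{T}(x(t)), y(t)-x(t)\rangle+\langle \mathcal{T}(x(t)), x(t)-\underline{v}\rangle$. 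The first summand vanishes in the limit because $\mathcal{T}(x(t))$ is bounded (since $\mathcal{T}$ is Lipschitz by C$_2$ and $x$ is bounded) while $\|x(t)-y(t)\|\to 0$; for the second, the monotonicity clause of C$_4$ yields $\langle \mathcal{T}(x(t_n)), x(t_n)-\underline{v}\rangle\geq\langle \mathcal{T}(\underline{v}), x(t_n)-\underline{v}\rangle\to\langle \mathcal{T}(\underline{v}), \bar{x}^*-\underline{v}\rangle$. Combining these with the weak lower semicontinuity of $h$ (convex plus lsc, C$_3$), the vanishing right-hand side, and taking $\liminf$ as $n\to\infty$ delivers the Minty-type inequality
\begin{equation*}
\langle \mathcal{T}(\underline{v}), \underline{v}-\bar{x}^*\rangle + h(\underline{v}) - h(\bar{x}^*) \geq 0 \qquad \forall\,\underline{v}\in\mathcal{C}.
\end{equation*}

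To recover the Stampacchia form defining $S^{MVI(\mathcal{T}; h; \mathcal{C})}$, I would run the standard Minty linearization: for $\underline{w}\in\mathcal{C}$ and $s\in(0,1]$, set $\underline{v}_s=(1-s)\bar{x}^*+s\underline{w}\in\mathcal{C}$, invoke the convexity bound $h(\underline{v}_s)-h(\bar{x}^*)\leq s[h(\underline{w})-h(\bar{x}^*)]$, substitute $\underline{v}_s$ into the Minty inequality, divide by $s$, and let $s\downarrow 0$. Lipschitz continuity (C$_2$) forces $\mathcal{T}(\underline{v}_s)\to\mathcal{T}(\bar{x}^*)$ strongly, producing $\langle \mathcal{T}(\bar{x}^*), \underline{w}-\bar{x}^*\rangle + h(\underline{w}) - h(\bar{x}^*)\geq 0$ for every $\underline{w}\in\mathcal{C}$, i.e., $\bar{x}^*\in S^{MVI(\mathcal{T}; h; \mathcal{C})}$. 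Both Opial hypotheses then hold, so $x(t)\rightharpoonup x_\infty\in S^{MVI(\mathcal{T}; h; \mathcal{C})}$, and $y(t)$ shares the same weak limit because $\|x(t)-y(t)\|\to 0$.

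The delicate point is the passage to the weak limit in $\langle \mathcal{T}(x(t_n)), y(t_n)-\underline{v}\rangle$: both factors are only weakly convergent, so weak sequential continuity (C$_5$) alone would leave a weak--weak inner product whose limit cannot be asserted directly. Routing the nonlinearity through monotonicity to land in the Minty formulation, and then recovering Stampacchia via the convex-combination trick, is what unlocks the conclusion without any strong convergence hypothesis on $x(t)$.
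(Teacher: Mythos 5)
Your proposal is correct and follows essentially the same route as the paper's proof: the prox characterization of $y(t)$ plus the monotonicity clause of (C$_4$) to reach a Minty-type inequality at the weak cluster point, the convex-combination trick $y_\sigma=\sigma z+(1-\sigma)\bar c$ with Lipschitz continuity of $\mathcal{T}$ to recover the Stampacchia form, and finally the continuous Opial lemma combined with Lemma \ref{Lemma bdd}. Your closing remark that routing the limit through monotonicity sidesteps the weak--weak inner product is a fair observation (the paper nominally cites (C$_5$) at that step even though, after the monotonicity bound, only boundedness of $\mathcal{T}(x(t_n))$ and $\|x(t_n)-y(t_n)\|\to 0$ are actually used), but this does not change the substance of the argument.
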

	\begin{proof}\noindent\\
		Let $\bar{c}\in\mathcal{H}$ be a weak sequential cluster point of $x(t)$ as $t\to+\infty.$ For each $n\geq0,$ let $\{t_n\}_{n\geq0}$ be a sequence in $[0, +\infty)$ with $t_n\to+\infty$ and $x(t_n)\rightharpoonup\bar{c},$ as $n\to+\infty.$
		
		\noindent Since $\underset{t\to+\infty}{\lim}\|x(t)-y(t)\|=0,$ we also have that $y(t_n)\rightharpoonup\bar{c}$ as $n\to+\infty.$ Indeed, $\|\mathcal{T}(x(t_n))-\mathcal{T}(y(t_n))\|\to 0$ as $n\to+\infty,$ since $\mathcal{T}$ is Lipschitz continuous.\\
		
		\noindent We now show that $\bar{c}\in S^{MVI(\mathcal{T}; h; \mathcal{C})}.$ 
		Since the sequence $\{y(t_n)\}_{n\geq0}\subset\mathcal{C}$ and $\mathcal{C}$ is weakly closed, we have that $\bar{c}\in\mathcal{C}.$\\
		Let $y\in \mathcal{C}$ be fixed. For each $n\geq0,$ we have 
		$$y(t_n) = prox_{\lambda h}(x(t_n)-\lambda\mathcal{T}(x(t_n))).$$
		Therefore, by \eqref{2}, it follows that
		\begin{eqnarray*}
			\langle x(t_n)-\lambda\mathcal{T}(x(t_n))-y(t_n), y-y(t_n)\rangle\leq\lambda\left[h(y)-h(y(t_n))\right].
		\end{eqnarray*}
		Or equivalently
		\begin{eqnarray}\label{10}
		\lambda\left[h(y(t_n))-h(y)\right]&\leq& \langle y(t_n)-x(t_n)+\lambda\mathcal{T}(x(t_n)), y-y(t_n)\rangle	\nonumber\\
		&=& \langle y(t_n)-x(t_n), y-y(t_n)\rangle +\lambda \langle\mathcal{T}(x(t_n)), y-y(t_n)\rangle \nonumber\\
		&=& \langle y(t_n)-x(t_n), y-y(t_n)\rangle +\lambda \langle\mathcal{T}(x(t_n)), y-x(t_n)\rangle+ \lambda\langle\mathcal{T}(x(t_n)), x(t_n)-y(t_n)\rangle\nonumber\\
		&\leq& \langle y(t_n)-x(t_n), y-y(t_n)\rangle +\lambda \langle\mathcal{T}y, y-x(t_n)\rangle+ \lambda\langle\mathcal{T}(x(t_n)), x(t_n)-y(t_n)\rangle,
		\end{eqnarray}
		where the last inequality is obtained using the fact that $\mathcal{T}$ is a monotone  operator ($\mathcal{T}$ satisfies the general monotonicity condition).
		Letting $n\longrightarrow+\infty$ in \eqref{10} (if necessary, the sub trajectories), noting that $\|x(t_n)-y(t_n)\|\longrightarrow 0,$  $\mathcal{T}$ is sequentially weakly continuous on $\mathcal{C},$  $h$ is a lower semicontinuous function and $\{y(t_n)\}_{n\geq 0}$ is bounded, therefore, we have
		\begin{eqnarray}\label{11}
			\langle \mathcal{T}y, y-\bar{c}\rangle +h(y)-h(\bar{c})\geq0.
		\end{eqnarray}
		Given any arbitrary point $z\in\mathcal{C},$ let $y_\sigma = \sigma z+(1-\sigma)\bar{c},$ where $\sigma\in(0,1), \ \bar{c}\in\mathcal{C}.$ Then, $y_\sigma\in\mathcal{C},$ since $\mathcal{C}$ is convex.\\
		
		\noindent Now, put $y=y_\sigma$ in \eqref{11}, we obtain
		\begin{eqnarray}\label{12}
			\langle \mathcal{T}y_\sigma, y_\sigma-\bar{c}\rangle +h(y_\sigma)-h(\bar{c})\geq0.
		\end{eqnarray}
		Hence, since $h$ is convex, we have
		\begin{eqnarray}\label{13}
			\langle \mathcal{T}y_\sigma, z-\bar{c}\rangle +h(z)-h(\bar{c})\geq0.
		\end{eqnarray}
		Taking limit as $\sigma\longrightarrow0$ in \eqref{13} and using the fact that $\mathcal{T}$ is Lipschitz, we obtain
		\begin{eqnarray*}
			\langle \mathcal{T}\bar{c}, z-\bar{c}\rangle +h(z)-h(\bar{c})\geq0.
		\end{eqnarray*}
		Since $z\in\mathcal{C}$ is arbitrarily chosen, we have that  $\bar{c}\in S^{MVI(\mathcal{T}; h; \mathcal{C})}.$\\
		
		\noindent On the other hand, by Lemma \ref{Lemma bdd},  we have that $\underset{t\to+\infty}{\lim}\|x(t)-\bar{c}\|$ exists for each  $\bar{c}\in S^{MVI(\mathcal{T}; h; \mathcal{C})}$. Hence, by Lemma \ref{BOT2.2}, the trajectory $x(t)$ converges weakly to an element in the solution set,  $S^{MVI(\mathcal{T}; h; \mathcal{C})}$ as $t\longrightarrow+\infty.$ Again, by Lemma \ref{BOT2.2}, we have that
		\begin{eqnarray*}
			\underset{t\to+\infty}{\lim}\|x(t)-y(t)\| = 0,
		\end{eqnarray*}
		we immediately have that the trajectory $y(t)$ also converges weakly to the same element in the solution set,  $S^{MVI(\mathcal{T}; h; \mathcal{C})},$ as $t\longrightarrow+\infty.$
	\end{proof}

\hfill
    
	\begin{theorem}
		Assume that $\mathcal{T}:\mathcal{H}\longrightarrow\mathcal{H}$ is $h-$strongly pseudomonotone on $\mathcal{C}$ with modulus $\mu>0$ and Lipschitz continuous with constant $\beta>0,$ and $0<\lambda<\dfrac{1}{1+\beta^2}.$ Assume that $h$ is a proper, lower semicontinuous convex real-valued function. Then for each $t\in(0,+\infty),$ the unique solution  $\bar{c}$  of the MVI \eqref{MVI1}, which is also the equilibrium point of the dynamical system \eqref{DS MVI}, is globally exponentially stable for all the solutions $x(t).$\\
		That is, 
		\begin{eqnarray*}
			\|x(t)-\bar{c}\|^2\leq\|x(0)-\bar{c}\|^2e^{-\alpha t} \ \ \forall t\in[0,+\infty),
		\end{eqnarray*}
		where $\alpha=2\left[1-\lambda(1+\beta^2)\right]\left(\dfrac{\lambda\mu}{1+\lambda(\mu+\beta)}\right)^2.$
	\end{theorem}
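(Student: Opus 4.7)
The plan is to combine the Lyapunov-style bound from Lemma \ref{DS1} with a quantitative lower bound on $\|x(t)-y(t)\|$ in terms of $\|x(t)-\bar{c}\|$, and then close the loop with Gronwall's inequality (Lemma \ref{Gronwall}) with $K=0$. The $h$-strong pseudomonotonicity first ensures uniqueness of the equilibrium $\bar{c}$ and, crucially, provides the quadratic term that will convert the $\|x(t)-y(t)\|^2$ bound into one involving $\|x(t)-\bar{c}\|^2$. Note that $h$-strong pseudomonotonicity implies C$_4$, so Lemma \ref{DS1} applies and gives
\begin{equation*}
  \frac{d}{dt}\|x(t)-\bar{c}\|^2 \;\le\; -2\bigl[1-\lambda(1+\beta^2)\bigr]\,\|x(t)-y(t)\|^2.
\end{equation*}

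The central step is to produce an inequality of the form $\|x(t)-y(t)\|\ge c\,\|x(t)-\bar{c}\|$ for a concrete constant $c>0$. To do this, I would start from $\bar{c}\in S^{MVI(\mathcal{T};h;\mathcal{C})}$, so that $\langle \mathcal{T}\bar{c},y(t)-\bar{c}\rangle + h(y(t))-h(\bar{c})\ge 0$, and apply $h$-strong pseudomonotonicity at the pair $(\bar{c},y(t))$ to obtain
\begin{equation*}
  \lambda\langle \mathcal{T}(y(t)),\,y(t)-\bar{c}\rangle + \lambda\bigl[h(y(t))-h(\bar{c})\bigr] \;\ge\; \lambda\mu\,\|y(t)-\bar{c}\|^2.
\end{equation*}
Next, I would use the proximal characterization of $y(t)$ in Lemma \ref{A1} with $\underline{v}=\bar{c}$, rearrange to get an upper bound for $\lambda[h(y(t))-h(\bar{c})]$, and add it to the previous display. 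The $\lambda\mathcal{T}(x(t))$ term inside the inner product will cancel against $\lambda\mathcal{T}(y(t))$ up to the difference $\lambda[\mathcal{T}(y(t))-\mathcal{T}(x(t))]$, yielding
\begin{equation*}
  \lambda\mu\,\|y(t)-\bar{c}\|^2 \;\le\; \langle x(t)-y(t),\, y(t)-\bar{c}\rangle + \lambda\langle \mathcal{T}(y(t))-\mathcal{T}(x(t)),\, y(t)-\bar{c}\rangle.
\end{equation*}
Cauchy--Schwarz together with the $\beta$-Lipschitz continuity of $\mathcal{T}$ then gives $\lambda\mu\,\|y(t)-\bar{c}\| \le (1+\lambda\beta)\,\|x(t)-y(t)\|$.

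From here the finish is routine. The triangle inequality $\|x(t)-\bar{c}\|\le \|x(t)-y(t)\|+\|y(t)-\bar{c}\|$ combined with the last display yields
\begin{equation*}
  \|x(t)-y(t)\| \;\ge\; \frac{\lambda\mu}{1+\lambda(\mu+\beta)}\,\|x(t)-\bar{c}\|.
\end{equation*}
Substituting this lower bound into the Lyapunov inequality produces $\frac{d}{dt}\|x(t)-\bar{c}\|^2 \le -\alpha\,\|x(t)-\bar{c}\|^2$ with precisely the advertised $\alpha$. Gronwall's lemma with $K=0$ then delivers $\|x(t)-\bar{c}\|^2\le \|x(0)-\bar{c}\|^2 e^{-\alpha t}$, which holds for every trajectory and hence gives global exponential stability.

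The main obstacle is the middle algebraic manoeuvre: correctly orchestrating the proximal inequality for $y(t)$ and the $h$-strong pseudomonotone inequality at $(\bar{c},y(t))$ so that the $h$-values telescope and the only terms remaining are the inner products $\langle x(t)-y(t),y(t)-\bar{c}\rangle$ and $\lambda\langle \mathcal{T}(y(t))-\mathcal{T}(x(t)),y(t)-\bar{c}\rangle$. Once these inner products are isolated, the Lipschitz and Cauchy--Schwarz estimates are standard, and the precise constant $\frac{\lambda\mu}{1+\lambda(\mu+\beta)}$ falls out cleanly from the triangle inequality.
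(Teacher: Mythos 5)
Your proposal is correct and follows essentially the same route as the paper's proof: the $h$-strong pseudomonotonicity applied at $(\bar{c},y(t))$ combined with the proximal inequality of Lemma \ref{A1} yields $\lambda\mu\|y(t)-\bar{c}\|\leq(1+\lambda\beta)\|x(t)-y(t)\|$, the triangle inequality gives the lower bound $\|x(t)-y(t)\|\geq\frac{\lambda\mu}{1+\lambda(\mu+\beta)}\|x(t)-\bar{c}\|$, and Lemma \ref{DS1} plus Gronwall's lemma with $K=0$ closes the argument exactly as in the paper. The only cosmetic difference is that you defer the Cauchy--Schwarz/Lipschitz estimate until after the two inequalities are added, whereas the paper applies it earlier; the resulting constant $\alpha$ is identical.
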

	\begin{proof}\noindent\\
		Let $\bar{c}\in\mathcal{C}$ be a solution to MVI \eqref{MVI1} be fixed for all $t\in(0,+\infty).$ Since $y(t)\in\mathcal{C},$ we obtain
		\begin{eqnarray}\label{T21}
			\langle\mathcal{T}(\bar{c}), y(t)-\bar{c}\rangle+h(y(t))-h(\bar{c})\geq0.
		\end{eqnarray}
		By the $h-$strong pseudomonotonicity of $\mathcal{T}$ on $\mathcal{C},$ we have
		\begin{eqnarray}\label{T22}
			\langle\mathcal{T}(y(t)), y(t)-\bar{c}\rangle+h(y(t))-h(\bar{c})\geq\mu\|y(t)-\bar{c}\|^2.
		\end{eqnarray}
		On the other hand, using the Lipschitz continuity of $\mathcal{T}$ and inequality \eqref{T22}, we obtain that
		\begin{eqnarray*}
			\langle\mathcal{T}(x(t)), \bar{c}-y(t)\rangle &=& \langle\mathcal{T}(x(t))-\mathcal{T}(y(t)), \bar{c}-y(t)\rangle-\langle\mathcal{T}(y(t)), y(t)-\bar{c}\rangle\\
			&\leq&\|\mathcal{T}(x(t))-\mathcal{T}(y(t))\|\|y(t)-\bar{c}\|+h(y(t))-h(\bar{c})-\mu\|y(t)-\bar{c}\|^2\\
			&\leq&\beta\|x(t)-y(t)\|\|y(t)-\bar{c}\|+h(y(t))-h(\bar{c})-\mu\|y(t)-\bar{c}\|^2.
		\end{eqnarray*}
		This implies
		\begin{eqnarray}\label{T23}
			\lambda\langle\mathcal{T}(x(t)), \bar{c}-y(t)\rangle\leq\lambda\beta\|x(t)-y(t)\|\|y(t)-\bar{c}\|+\lambda \left[h(y(t))-h(\bar{c})\right]-\lambda\mu\|y(t)-\bar{c}\|^2.
		\end{eqnarray}
		From \eqref{2}, we have
		\begin{eqnarray}\label{T24}
			\langle x(t)-y(t), \bar{c}-y(t)\rangle-\lambda\langle\mathcal{T}(x(t)), \bar{c}-y(t)\rangle\leq\lambda\left[h(\bar{c})-h(y(t))\right].
		\end{eqnarray}
		Combining \eqref{T23} and \eqref{T24}, we obtain
		\begin{eqnarray*}
			\langle x(t)-y(t),\bar{c}-y(t)\rangle\leq\lambda\beta\|x(t)-y(t)\|\|y(t)-\bar{c}\|-\lambda\mu\|y(t)-\bar{c}\|^2.
		\end{eqnarray*}
		This implies
		\begin{eqnarray*}
			\lambda\mu\|y(t)-\bar{c}\|^2&\leq& \lambda\beta\|x(t)-y(t)\|\|y(t)-\bar{c}\|+\langle x(t)-y(t),y(t)-\bar{c}\rangle\\
			&\leq&\lambda\beta\|x(t)-y(t)\|\|y(t)-\bar{c}\|+\|x(t)-y(t)\|\|y(t)-\bar{c}\|\\
			&=&(1+\lambda\beta)\|x(t)-y(t)\|\|y(t)-\bar{c}\|.
		\end{eqnarray*}
		This further gives
		\begin{eqnarray}\label{T25}
			\|y(t)-\bar{c}\|\leq \dfrac{1+\lambda\beta}{\lambda\mu}\|x(t)-y(t)\|.
		\end{eqnarray}
		Observe that
		\begin{eqnarray}\label{T26}
			\|x(t)-\bar{c}\|\leq\|x(t)-y(t)\|+\|y(t)-\bar{c}\|.
		\end{eqnarray}
		Thus, by \eqref{T25}, inequality \eqref{T26} gives
		\begin{eqnarray*}
			\|x(t)-\bar{c}\|&\leq&\|x(t)-y(t)\|+\dfrac{1+\lambda\beta}{\lambda\mu}\|x(t)-y(t)\|\\
			&=& \dfrac{1+\lambda(\mu+\beta)}{\lambda\mu}\|x(t)-y(t)\|.
		\end{eqnarray*}
		That is 
		\begin{eqnarray}\label{T27}
			\|x(t)-y(t)\|\geq\dfrac{\lambda\mu}{1+\lambda(\mu+\beta)}\|x(t)-\bar{c}\|.
		\end{eqnarray}
		Consider the Lyapunov function $\varphi(t)=\|x(t)-\bar{c}\|^2.$\\
		Therefore, by Lemma \ref{DS1} and \eqref{T27}, we have
		\begin{eqnarray}\label{T28}
			\dot{\varphi}(t) &=& \dfrac{d}{dt}\|x(t)-\bar{c}\|^2\nonumber\\
			&=&2\langle \dot{x}(t)-\bar{c},x(t)-\bar{c}\rangle\nonumber\\
			&\leq&-2\left[1-\lambda(1+\beta^2)\right]\|x(t)-y(t)\|^2\nonumber\\
			&\leq& -2\left[1-\lambda(1+\beta^2)\right]\left(\dfrac{\lambda\mu}{1+\lambda(\mu+\beta)}\right)^2\|x(t)-\bar{c}\|^2.
		\end{eqnarray}
		Thus, by Lemma \ref{Gronwall}, we have
		\begin{eqnarray*}
			\|x(t)-\bar{x}\|^2\leq\|x(0)-\bar{c}\|^2e^{-\alpha t},
		\end{eqnarray*}
		where $\alpha=2\left[1-\lambda(1+\beta^2)\right]\left(\dfrac{\lambda\mu}{1+\lambda(\mu+\beta)}\right)^2.$ \\
		Since $x(t)$ is arbitrary, hence, by Definition \ref{defn expo} (IV),  the equilibrium point $\bar{c}$  of system \eqref{DS MVI} is globally exponentially stable for all solutions $x(t),$ $t\geq0.$
	\end{proof}
	\begin{remark}
		It is important to note that when the dynamical system \eqref{DS MVI} is explicitly discretized with respect to the time variable $t,$ then, with stepsize  $\hbar_n>0$  and a starting point $x_0\in\mathcal{H},$ it will produce the following iterative method:
		\begin{eqnarray*}
			\dfrac{x_{n+1}-x_n}{\hbar_n} +x_n = 	prox_{\lambda h}(x_n-\lambda\mathcal{T}x_n)+\lambda\mathcal{T}x_n-\lambda[\mathcal{T}(prox_{\lambda h}(x_n-\lambda\mathcal{T}x_n))] \ \ \ \forall n\geq0.
		\end{eqnarray*}
		By letting $y_n :=prox_{\lambda h}(x_n-\lambda\mathcal{T}x_n),$ then, this method can be rewritten as
		\begin{eqnarray*}
			\begin{cases}
				y_n = prox_{\lambda h}(x_n-\lambda\mathcal{T}x_n)\\ 
				\\
				x_{n+1} = (1-\hbar_n)x_n+\hbar_n[ y_n+\lambda(\mathcal{T}x_n-\mathcal{T}y_n)] \ \ \ \forall n\geq0,
			\end{cases}
		\end{eqnarray*}
		which is exactly the Tseng's forward-backward-forward method with $\{{\hbar_n}\}_{n\geq0}$ as the relaxation parameter.
		But, in a case where $\hbar_n=1,$ then, we have the following
		\begin{eqnarray*}
			\begin{cases}
				y_n = prox_{\lambda h}(x_n-\lambda\mathcal{T}x_n)\\ 
				\\
				x_{n+1} = y_n+\lambda[\mathcal{T}x_n-\mathcal{T}y_n] \ \ \ \forall n\geq0,
			\end{cases}
		\end{eqnarray*}
		which is the classical proximal forward-backward-forward method for solving MVI\eqref{MVI1} and other optimization problems proposed in \cite{Tseng}.
	\end{remark}

	\hfill
	
	\section{Numerical Experiments}\label{numerics}
\noindent We present some numerical experiments of our proposed dynamical system \eqref{DS MVI} in this section. The numerical implementation is carried out using MATLAB 2023(b), running on a personal computer equipped with an Intel(R) Core(TM) i5-10210U CPU at 2.30GHz and 8.00 GB RAM.

	\begin{example}\label{ex1}
		Let $\mathcal{C}=[3,5].$ Define $h:\mathcal{C}\longrightarrow\mathbb{R}$ and $\mathcal{T}:\mathbb{R}\longrightarrow\mathbb{R}$ by\\
		\begin{eqnarray*}
			h(\underline{u})=
			\begin{cases}
				\underline{u}^2, \ \ \mbox{if} \ \underline{u}\in\mathcal{C},\\
				+\infty, \ \ \mbox{otherwise}
			\end{cases}
		\end{eqnarray*}
		and 
		$$\mathcal{T}(\underline{u})=4-\underline{u} \  \ \ \ \forall \underline{u}\in\mathbb{R}.$$
		Where the inner product function $\langle  \cdot, \cdot\rangle$ on $\mathbb{R}$ is defined by $\langle \underline{u},\underline{v}\rangle =\underline{u} \cdot\underline{v}$ for each $\underline{u},\underline{v}\in\mathbb{R}.$ consider the MVI of finding $\bar{x}\in dom \ h$ such that
			$$\langle \mathcal{T}\underline{u}, \underline{v}-\underline{u}\rangle+h(\underline{v})-h(\underline{u})\geq0, \ \ \forall \underline{v}\in dom~h.$$ \\
			In what that follows, we show that $C_1-C_3$ of Assumption \ref{ASS1} are satisfied, thus:
			\begin{enumerate}
				\item It can easily be checked that the solution set $S^{MVI(\mathcal{T};h;\mathcal{C})}$ is nonempty. In fact, $\bar{x}=3$ is the unique solution of the MVI.
				\item It is clear that $\mathcal{T}$ is $1-$Lipschitz continuous. 
				\item Clearly, the function $h$ is proper and convex. In addition, since $|h(\underline{u})-h(\underline{v})| = |\underline{u}^2-\underline{v}^2|\leq|\underline{u}+\underline{v}||\underline{u}-\underline{v}|\leq10|\underline{u}-\underline{v}|,$ thus $h$ is Lipschitz continuous with constant $10.$ Now, we show that the $h-$pseudomonotonicity of $\mathcal{T}$ does not necessarily guarantee the pseudomonotonicity of $\mathcal{T}.$ \\
				Suppose 
				$$\langle \mathcal{T}\underline{u}, \underline{v}-\underline{u}\rangle+h(\underline{v})-h(\underline{u})\geq0, \ \ \forall \underline{v}\in [3,5].$$
				That is,
				$$(\underline{v}-\underline{u})(4+\underline{v})\geq0.$$
				But $4+\underline{v}>0,$ since $\underline{v}\in[3,5].$ This further implies that $(\underline{v}-\underline{u})\geq0.$ Hence, we have that 
				\begin{eqnarray*}
					\langle \mathcal{T}\underline{v}, \underline{v}-\underline{u}\rangle+h(\underline{v})-h(\underline{u})
					&=& (\underline{v}-\underline{u})(4+\underline{u})\\
					&\geq&0,
				\end{eqnarray*} 
				which shows that $\mathcal{T}$ is $h-$pseudomonotone on $dom~h.$ However, if we take $\underline{u} = 3$ and $\underline{v} = 5,$ then, we have that $\langle \mathcal{T}\underline{u}, \underline{v}-\underline{u}\rangle = 2>0,$ and $\langle \mathcal{T}\underline{v}, \underline{v}-\underline{u}\rangle = -2<0,$ showing that $\mathcal{T}$ is not pseudomonotone let alone monotone.
			\end{enumerate}
		\end{example}

        \hfill
            
            \begin{example}\label{ex2}
				Let $\mathcal{C}= \{x\in[-5,5]^3 ~ : ~ x_1+x_2+x_3 =0\}\subseteq\mathbb{R}^3$ and $\mathcal{T}:\mathbb{R}^3\longrightarrow\mathbb{R}^3$ be defined as
				$$\mathcal{T}(x) = \big(e^{-\|x\|^2}+q\big)Mx,$$
				where $q=0.2$ and 
				\begin{eqnarray*}
					M= \begin{bmatrix}
						1 & 0 & -1\\
						0 & 1.5 & 0\\
						-1 & 0 & 2
					\end{bmatrix}.
				\end{eqnarray*}
				Then, $\mathcal{T}$ is Lipschitz continuous with $\beta\approx 5.0679,$ and strongly pseudomonotone on $\mathbb{R}^3$ with constant $\mu:=q\cdot\lambda_{\min}\approx 0.0764,$ where $\lambda_{\min}$ is the smallest eigenvalue of $M.$ But $\mathcal{T}$ is not monotone since  for $x= (-1, 0, 0)^T,$ $y=(-2, 0,0)^T\in\mathbb{R}^3,$ we have
				$\langle \mathcal{T}(x)-\mathcal{T}(y), x-y\rangle = -0.1312<0.$\\
				
	\end{example}

\hfill
    
	\begin{example}\label{ex3}
		Consider the following logistic regression problem with an $\ell_1$-regularization term:
		$$\underset{x\in\mathbb{R}^{3}}{\min}\sum_{i=1}^{100}\log\Biggl(1+ \exp(-a_ib_i^T x)\Biggr)+\eta\|x\|_1,$$
		where $a_i \in \{-1, 1\},$ $b_i\in\mathbb{R}^{3},$ $i = 1, 2, ..., 100$ are randomly chosen, $\eta>0$  and the non-smooth $\ell_1$-regularization term is added to prevent overfitting on the given data. We choose  $\eta=2.5$, $\lambda=0.01$ and we generate $x_0$ randomly.
        
        \noindent In Figure \ref{FF3}, we plot the loss function against iterations. The loss function is the objective function being minimized during the optimization process, given as 
$$\text{Loss}(x) = \underbrace{\sum_{i=1}^{n} \log\Big(1 + \exp(-a_i b_i^\top x)\Big)}_{\text{Logistic Loss (Data Fidelity)}} + \underbrace{\eta \|x\|_1}_{\text{Regularization Term (Sparsity Encouragement)}}.$$

\noindent The Logistic Loss measures the fit of the model to the data and penalizes the misclassification of samples. On the other hand, the Regularization Term penalizes large values of $x$, encouraging sparsity in the model coefficients with $\|x\|_1$ as the $\ell_1$-norm of $x$ and $\eta > 0$ as the controls of the strength of regularization.

\subsection*{Interpretation:}

\noindent The {loss function value} at each iteration quantifies the tradeoff between model accuracy (logistic loss) and sparsity (regularization term).
A lower value of the loss function indicates a better tradeoff, with the model fitting the data while maintaining sparsity.

\subsection*{Why Plot It Against Iterations?}
\begin{itemize}
    \item \textbf{Convergence Tracking}: The loss function plot shows how the optimization algorithm reduces the objective value over iterations, indicating progress toward the optimal solution. We obtain numerically, convergence at iteration $33$ with optimal solution of $x=(-0.1899, -0.1633,   0.2324)^T$.

    \item  \textbf{Algorithm Performance}:
  The shape of the plot can help diagnose issues, such as slow convergence or improper step size.
\end{itemize}
	\end{example}

    \hfill

\begin{figure}
\begin{center}
\includegraphics[width=8cm]{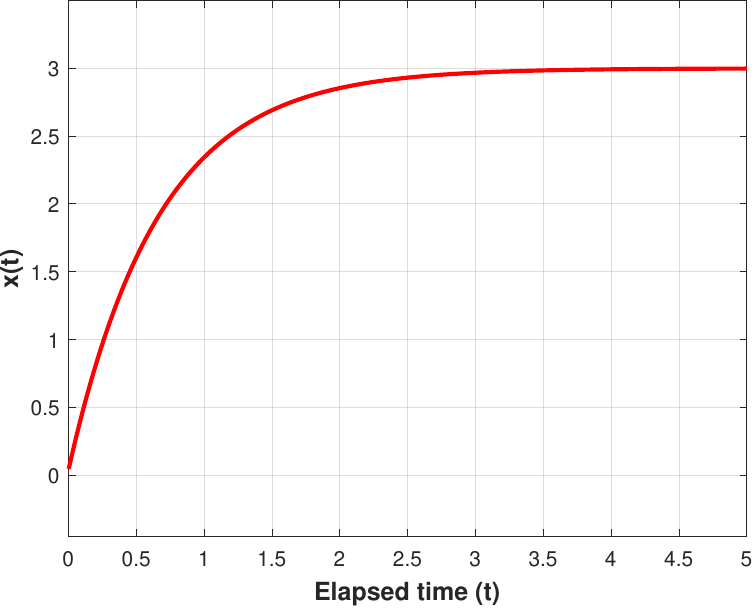}%
\includegraphics[width=8cm]{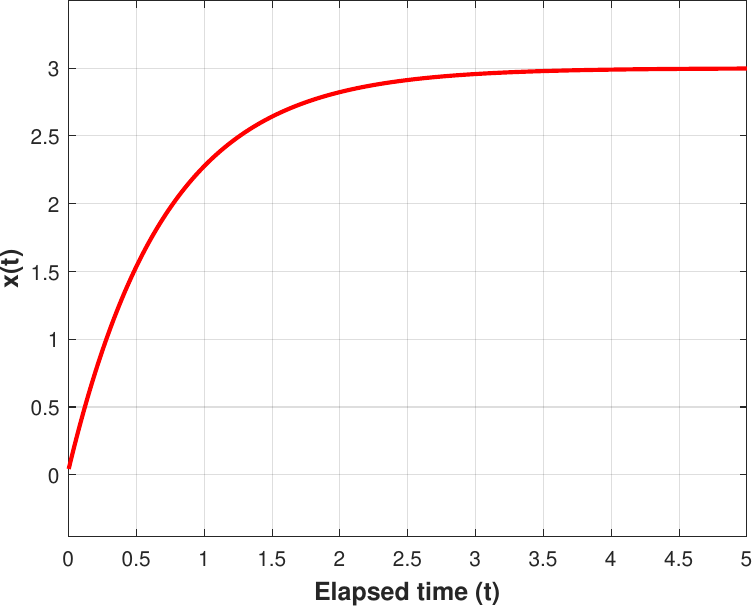}\\
\includegraphics[width=8cm]{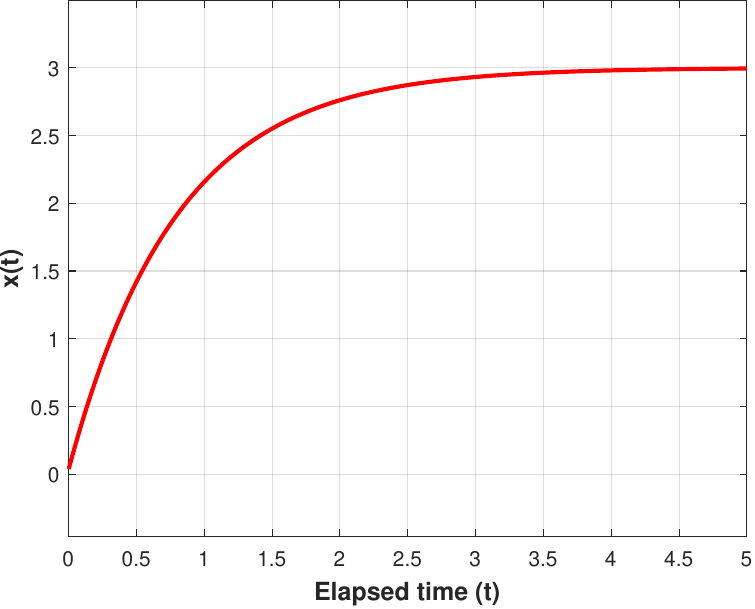}%
\end{center}
\caption{Convergence of $x(t)$ generated \eqref{DS MVI} to the unique solution $\bar{x}=3$ over time for Example \ref{ex1} with $x_0=0.1$: 
Upper Left: {$\lambda=\frac{0.99}{2}$}; Upper Right: {$\lambda=\frac{0.8}{2}$}; Bottom: {$\lambda=\frac{0.5}{2}$}.}\label{FF2}
\end{figure}

\begin{figure}
\begin{center}
\includegraphics[width=8cm]{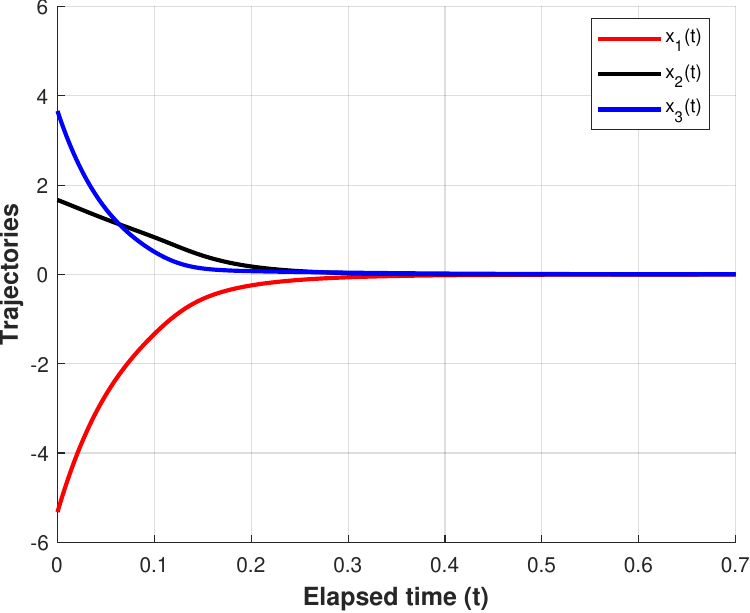}%
\includegraphics[width=8cm]{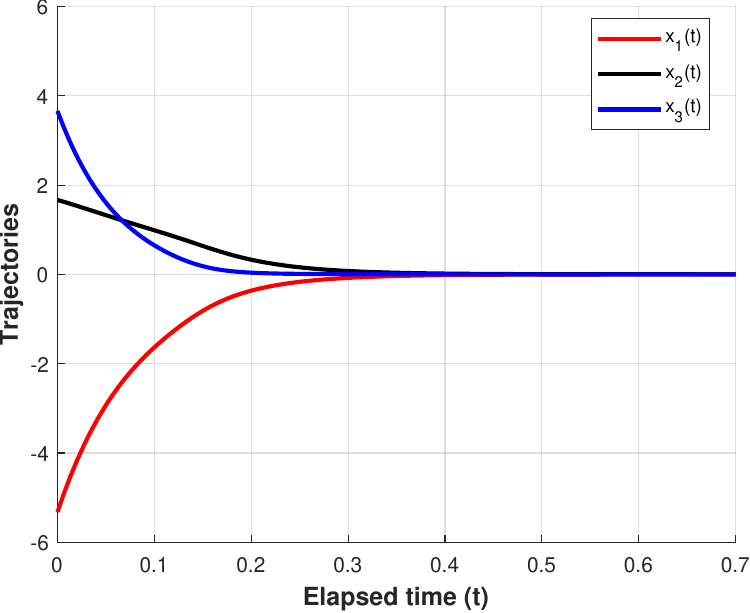}\\
\includegraphics[width=8cm]{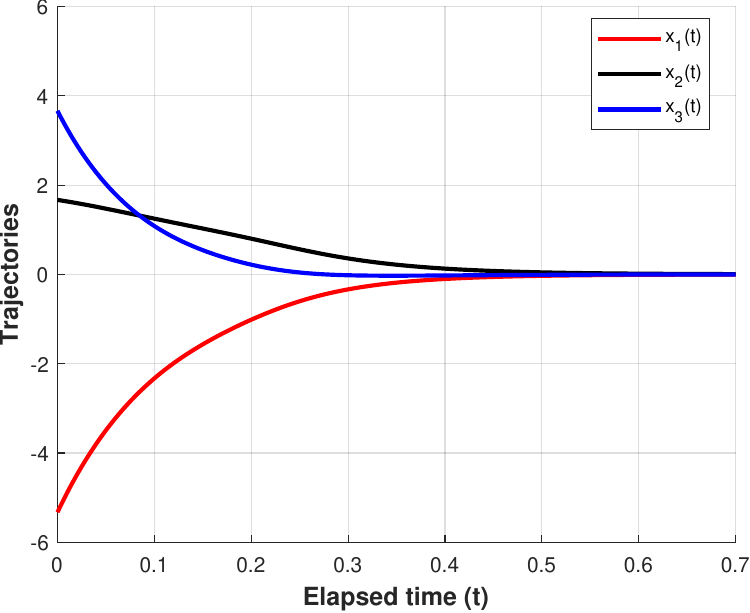}%
\end{center}
\caption{Trajectories generated by the dynamical system \eqref{DS MVI} with $x_0=(-4, 3, 5)^T$ for Example \ref{ex2}: 
Upper Left: {$\lambda=\frac{0.99}{1+\beta^2}$}; Upper Right: {$\lambda=\frac{0.8}{1+\beta^2}$}; Bottom: {$\lambda=\frac{0.5}{1+\beta^2}$}.}\label{FF2}
\end{figure}

\begin{figure}
\begin{center}
\includegraphics[width=9cm]{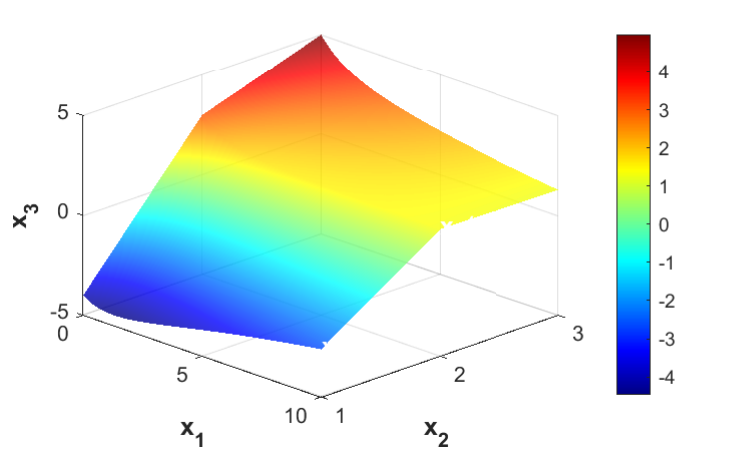}%
\includegraphics[width=8cm]{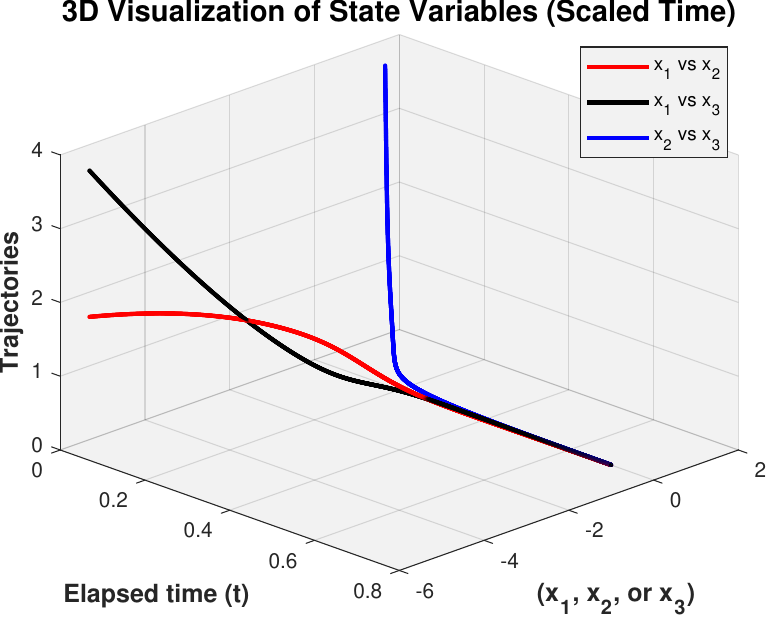}%
\end{center}
\caption{3D visualization of the trajectories generated by the dynamical system \eqref{DS MVI} for Example \ref{ex2}: $\lambda=\frac{0.99}{1+\beta^2}$.}\label{FF2}
\end{figure}

\begin{figure}
\begin{center}
\includegraphics[width=8cm]{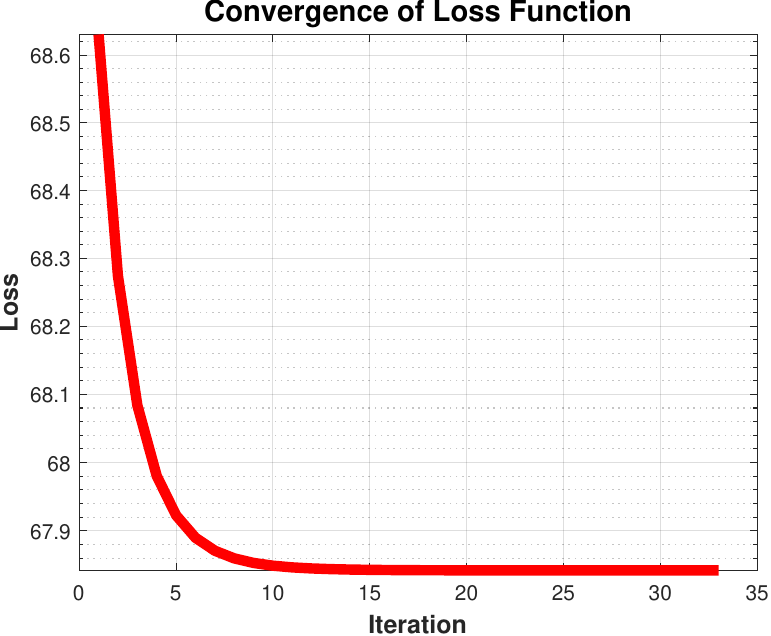}%
\includegraphics[width=8cm]{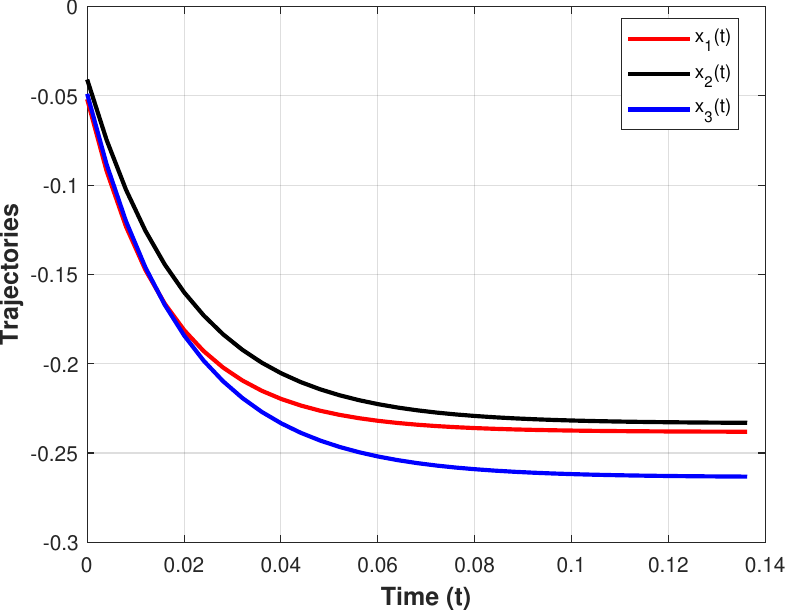}%
\end{center}
\caption{Dynamical system \eqref{DS MVI} for Example \ref{ex3}: 
Left: Shows the convergence of the loss function over iterations; Right: Convergence of the trajectories over time.}\label{FF3}
\end{figure}

\section{Conclusion}\label{Conclusion}
    \noindent In this work, we applied the Tseng's forward-backward-forward dynamical system to  solve a convex MVIP. We established that the generated trajectories of the system considered weakly convergence to a solution of the MVI when the associated operator is monotone and Lipschitz and the real-valued function is proper, convex and lower semicontinuous. In general, the Lipschitz operator and the real-valued function satisfy the general monotonicity condition. In addition, the global exponential stability of the equilibrium point of the dynamical system studied is obtained under the $h-$strongly pseudomonotonicity condition. Lyapunov-type analysis played a key role in the establishment of the convergence results. Numerical experiments are also presented to demonstrate the effectiveness of the theoretical findings. Part of our future research is to further investigate this dynamical system in a stochastic setting.
    
    \hfill

\end{document}